\definecolor{darkgreen}{rgb}{0,0.4,0}
\definecolor{darkred}{rgb}{0.5,0,0}
\definecolor{violet}{rgb}{0.3,0,0.3}
\theoremstyle{plain}
\newtheorem{lemma}{Lemma}[section]
\newtheorem{thm}[lemma]{Theorem}
\newtheorem{prop}[lemma]{Proposition}
\newtheorem{cor}[lemma]{Corollary}
\theoremstyle{definition}
\newtheorem{defn}[lemma]{Definition}
\newtheorem{rmk}[lemma]{Remark}
\def\blbox{\quad \vrule height7.5pt width4.17pt depth0pt}
\newcommand{\cmt}[1]{\opt{draft}{\textcolor[rgb]{0.5,0,0}{
$\LHD$ #1 $\RHD$\marginpar{\blbox}}}}
\numberwithin{equation}{section}
\newcommand{\m}{\ensuremath{{\cal M}}}
\newcommand{\al}{\alpha}
\newcommand{\N}{\ensuremath{{\mathbb N}}}
\newcommand{\C}{\ensuremath{{\mathbb C}}}
\DeclareMathOperator{\Vol}{vol}
\def\blbox{\quad \vrule height7.5pt width4.17pt depth0pt}
\newcommand{\beq}{\begin{equation}}
\newcommand{\eeq}{\end{equation}}
\newcommand{\beqa}{\begin{equation}\begin{aligned}}
\newcommand{\eeqa}{\end{aligned}\end{equation}}
\newcommand{\brmk}{\begin{rmk}}
\newcommand{\ermk}{\end{rmk}}
\newcommand{\partref}[1]{\hbox{(\csname @roman\endcsname{\ref{#1}})}}
\newcommand{\GK}{{\mathrm{K}}}
\newcommand{\Cts}{{\mathrm{C}}}
\newcommand*{\ee}{\mathrm{e}}
\newcommand*\im{\mathop{\mathrm{im}}\nolimits}
\newcommand*\width{\mathop{\mathrm{w}}\nolimits}
\newcommand*\dist{\mathop{\mathrm{dist}}\nolimits}
\newcommand*\Mf{\mathcal{M}}
\newcommand*\Disc{\mathbb{D}}
\def\L{\mathrm{L}}
\newcommand*\gBall{\mathcal{B}}
\newcommand*\sph{\mathbb{S}}
\newcommand*\ddt{\frac{\mathrm{d}}{\mathrm{d}t}}
\newcommand*\pddt{\frac{\partial}{\partial t}}
\newcommand*\df{\mathrm{d}}
\newcommand*\dmu{\,\mathrm{d}\mu}
\newcommand*\ds{\mathrm{d}s}
\newcommand*\dz{\mathrm{d}z}
\newcommand*\Ball{\mathbb{B}}
\title{{\sc
ricci flows with bursts of \\ unbounded curvature
}
\\
\cmt{DRAFT with comments}
}
\author{Gregor Giesen and Peter M. Topping}
\date{\today}
\begin{document}
\selectlanguage{british}

\maketitle

\begin{abstract}
\noindent 
Given a completely arbitrary surface, whether or not it has bounded curvature, or even whether or not it is complete, there exists an instantaneously complete Ricci flow evolution of that surface that exists for a specific amount of time \cite{GT11}. In the case that the underlying Riemann surface supports a hyperbolic metric, this Ricci flow always exists for all time and converges (after scaling by a factor $\frac{1}{2t}$) to this hyperbolic metric \cite{GT11}, i.e. our Ricci flow \emph{geometrises} the surface. In this paper we show that there exist complete, bounded curvature initial metrics, including those conformal to a hyperbolic metric, which have subsequent Ricci flows 
developing unbounded curvature at certain intermediate times. In particular, when coupled with the uniqueness from \cite{Top13}, we find that \emph{any} complete Ricci flow  starting with such initial metrics \emph{must} develop unbounded curvature over some intermediate time interval, but that nevertheless, 
the curvature \emph{must} later become bounded and the flow must achieve geometrisation as $t\to\infty$, even though
there are other conformal deformations to hyperbolic metrics that do not involve unbounded curvature.

Another consequence of our constructions is that while our Ricci flow from \cite{GT11} must agree initially with the classical  flow of Hamilton and Shi in the special case that the initial surface is complete and of bounded curvature, by uniqueness, it is now clear that our flow lasts for a longer time interval in general, with Shi's flow stopping when the curvature blows up, but our flow continuing strictly beyond in these situations. 

All our constructions of unbounded curvature developing and then disappearing are in two dimensions. Generalisations to higher dimensions are then immediate.
%
\end{abstract}

\section{Introduction}

Hamilton \cite{Ham82} and Shi \cite{Shi89} proved that given a complete Riemannian manifold $(\m,g_0)$ with bounded curvature, there exists a complete Ricci flow  $g(t)$ on $\m$ for a short time, with $g(0)=g_0$ (see \cite{Top06} for an introduction to this topic).
The curvature of this Ricci flow is initially bounded, and the flow can be extended until such time that the curvature becomes unbounded.

Ricci flows with possibly \emph{unbounded} curvature in their initial condition and/or during the flow itself, were studied by the second author in \cite{Top10} in the special case of surfaces, and in \cite{GT11} we proved that one can always find an \emph{instantaneously complete} Ricci flow
starting at a completely arbitrary initial surface, whether of unbounded curvature or not, or indeed whether complete or not, which exists for a specific amount of time, and in \cite{Top13} this solution was shown to be unique. More precisely, we proved:


\begin{thm}[{Part of \cite[Theorem 1.3]{GT11} and \cite[Theorem 1.1]{Top13}}]
\label{thm:2d-existence}
Let $\bigl(\Mf^2,g_0\bigr)$ be a smooth Riemannian surface which 
could have unbounded curvature or be incomplete.
Depending on the conformal class,
we define $T\in(0,\infty]$ by
\begin{equation}
\label{eq:max-time}
T := \left\{\begin{array}{cl} \displaystyle
\frac{\Vol_{g_0}\Mf}{4\pi\mathop\chi(\Mf)} &
\text{if $\bigl(\Mf,g_0\bigr)
\stackrel{\mathrm{conf}}{=}\bigl(\sph^2,g_\sph\bigr)$ or
$\bigl(\mathbb RP^2,g_\sph\bigr)$ or
$\bigl(\mathbb C,|\dz|^2\bigr)$\footnotemark,}\\
\infty & \text{otherwise.}
\end{array}\right.
\end{equation}
\footnotetext{
Note that in the latter case,
also $T=\infty$ if $\Vol_{g_0}\mathbb C=\infty$.}
Then there exists a smooth Ricci flow $\bigl(g(t)\bigr)_{t\in[0,T)}$
such that
\begin{compactenum}[(i)]\medskip
\item $g(0)=g_0$;\smallskip
\item $\bigl(g(t)\bigr)_{t\in[0,T)}$ is instantaneously complete (i.e. $g(t)$ is complete for all $t\in (0,T)$);
\item $\bigl(g(t)\bigr)_{t\in[0,T)}$ is maximally stretched (see Remark \ref{max_stretch}),
\end{compactenum}\medskip
and this flow is unique in the sense that if
$\bigl(g_2(t)\bigr)_{t\in[0,T_2)}$ is
any other Ricci flow on $\Mf$ satisfying (i) and (ii), then
$T_2\leq T$ and $g_2(t)=g(t)$ for all $t\in[0,T_2)$.

\noindent
If $T<\infty$, then we have
\begin{equation}
\label{eq:2d-existince-volume}
\Vol_{g(t)}\Mf = 4\pi\mathop\chi(\Mf)\, (T-t) \longrightarrow 0
\quad\text{ as } t\nearrow T,
\end{equation}
and in particular, $T$ is the maximal existence time.
\end{thm}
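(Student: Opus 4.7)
My plan is to reduce two-dimensional Ricci flow to a scalar parabolic PDE for the conformal factor and to construct the solution by monotone approximation on a compact exhaustion.

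Because two-dimensional Ricci flow $\pddt g = -R(g)\,g$ preserves the conformal class, I would fix a uniformising background metric $\hat g$ in the conformal class of $g_0$ and write $g(t)=\ee^{2u(t)}\hat g$, reducing the flow to the scalar, logarithmic fast-diffusion-type equation
\[
\pddt u = \ee^{-2u}\bigl(\Delta_{\hat g} u - \tfrac{1}{2} R_{\hat g}\bigr),
\]
with initial datum $u_0$ determined by $g_0 = \ee^{2u_0}\hat g$. The whole problem then becomes a question about this degenerate parabolic PDE, and in particular the instantaneous completeness condition (ii) amounts to the blow-up condition $u(t,\cdot)\to+\infty$ at infinity of $\Mf$ for each $t>0$.

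To build the solution I would pick a relatively compact smooth exhaustion $\Omega_n \Subset \Omega_{n+1} \Subset \Mf$ and apply the construction of \cite{Top10} on each $\Omega_n$ to obtain an instantaneously complete, maximally stretched Ricci flow $g_n(t)$ with $g_n(0)=g_0|_{\Omega_n}$. The maximally stretched property forces the sequence $g_n$ to be monotone in $n$ on any fixed compact set, and the existence time on each $\Omega_n$ is controlled from below by the right-hand side of \eqref{eq:max-time} via a conformal-class specific supersolution. Local parabolic estimates then yield a smooth limit $g(t)=\ee^{2u(t)}\hat g$ on $[0,T)$ that inherits instantaneous completeness and the maximally stretched property from the approximants. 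Uniqueness within the maximally stretched class is a direct consequence of the definition: for two maximally stretched Ricci flows starting from the same $g_0$, mutual comparison forces them to coincide on the common existence interval and forces $T_2\leq T$.

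Finally, the volume identity \eqref{eq:2d-existince-volume} follows, in the compact cases $\sph^2$ and $\mathbb RP^2$, by combining $\ddt\Vol_{g(t)}\Mf = -\int_\Mf R_{g(t)}\dmu$ with Gauss--Bonnet. In the non-compact case $\bigl(\C,|\dz|^2\bigr)$ Gauss--Bonnet does not apply directly, so I would exhaust $\C$ by large disks and use that instantaneous completeness forces a hyperbolic-cusp asymptotic for $u$ near infinity, which is enough to control the boundary contribution and recover $-4\pi\chi(\Mf)$ in the limit. I expect the construction of the sharp conformal-class supersolution that dictates $T$, together with this non-compact Gauss--Bonnet computation, to be the main technical obstacles.
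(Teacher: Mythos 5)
This statement is quoted from \cite{GT11} and is not proved in the present paper, so there is no in-paper proof to compare against; I can only assess your sketch against what I understand \cite{GT11} actually does. Your broad outline --- reduce to a scalar PDE for the conformal factor against a uniformising background metric, construct the solution by monotone exhaustion, pin down the existence time with conformal-class-specific barriers, and obtain uniqueness by mutual comparison within the maximally-stretched class --- is faithful to the approach of \cite{GT11}, and your observation that uniqueness is immediate once both flows qualify as competitors for each other is correct as stated.

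There are, however, two points where the sketch elides the real content. First, you attribute to \cite{Top10} the production on each $\Omega_n$ of an \emph{instantaneously complete, maximally stretched} flow. The maximally-stretched notion is introduced in \cite{GT11}, not \cite{Top10}; and if you invoke such a flow on each $\Omega_n$ you are invoking precisely the theorem you are trying to prove (applied to $\Omega_n$), which is circular. The \cite{GT11} construction instead solves, on each $\Omega_n$, a Dirichlet problem with boundary data growing to $+\infty$ as $t\downarrow 0$; the monotonicity in $n$ then comes from the scalar comparison principle and the choice of boundary data, not from an a-priori maximality of the approximants, and the maximal-stretch property of the limit must be verified by comparing an arbitrary competitor against the $\Omega_n$-solutions. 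Second, the assertion $T_2\le T$ and the volume identity \eqref{eq:2d-existince-volume} are not immediate in the $\bigl(\C,|\dz|^2\bigr)$ case: Gauss--Bonnet on an exhausting sequence of disks leaves a geodesic-curvature boundary term, and controlling it requires the explicit cusp/cigar barriers that also govern the spatial asymptotics of $u$; this is nontrivial and is where most of the work in the parabolic case lies, so it should not be described as merely a technical obstacle to be expected.
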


It has been understood since the work of Hamilton and Chow \cite{Ham88, Cho91} that the Ricci flow on \emph{closed} surfaces has excellent geometrisation properties in the sense that after appropriate rescaling, the flow converges to a metric of constant curvature. 
It is then natural to ask to which extent this geometrisation occurs in the case that the underlying surface is noncompact.
In \cite[Theorem 1.3]{GT11} we also proved that geometrisation does indeed occur in the hyperbolic case.

\begin{thm}[{Special case of part of \cite[Theorem 1.3]{GT11}}]
\label{geom_thm}
Let $\bigl(\Mf^2, g_0\bigr)$ be any surface, possibly incomplete or of unbounded curvature, that is conformal to a complete hyperbolic metric $H$. Then the Ricci flow from Theorem \ref{thm:2d-existence}, which exists for all time $t\geq 0$, must converge in the sense that 
$$\frac{1}{2t}\,g(t)\longrightarrow H\quad\text{ smoothly locally as }t\to\infty.$$
Moreover, if there exists $M<\infty$ such that $g_0\leq MH$, then we have global convergence
$$\frac{1}{2t}\,g(t)\longrightarrow H\quad\text{ in }\Cts^k(\Mf, H)\text{ as }t\to\infty,$$
for any $k\in\N$.
\end{thm}

In this paper we prove that although the Ricci flow achieves geometrisation in this particularly simple form, the route it takes to get to the constant curvature metric is necessarily more complicated than one might initially expect.
Indeed, the following theorem finds smooth, complete initial metrics with bounded curvature, whose subsequent Ricci flows converge uniformly in $\Cts^k$ after rescaling to hyperbolic metrics, but for which the Ricci flow develops a burst of \emph{unbounded} curvature on some intermediate time interval.
Coupled with the uniqueness of Ricci flows proved in \cite{Top13}, this shows that Ricci flow has no choice but to develop this unbounded curvature, despite its nice initial and final behaviour, even though alternative deformations to constant curvature without developing unbounded curvature will exist.

\begin{thm}
\label{thm:ex-rf-curv-bursts}
There exist a complete, conformal, immortal Ricci flow
$\bigl(g(t)\bigr)_{t\in[0,\infty)}$ on the disc $\mathbb D$ arising from 
Theorem \ref{thm:2d-existence}, and a time
$t_1\in[1,3)$ such that
\beq
\label{mainclaim1}
\sup_\Mf \left|\GK_{g(t)}\right| \begin{cases}
<\infty & \text{for all $t\in[0,1)$} \\
=\infty & \text{for all $t\in\left(t_1,t_1+\nicefrac1{100}\right)$} \\
<\infty & \text{for all $t\in[4,\infty)$,}
\end{cases}
\eeq
and so that 
$$\frac1{2t}g(t)\longrightarrow H\qquad\text{ in }
\Cts^k(\mathbb D, H)\text{ as }t\to\infty,$$
where $H$ is the Poincar\'e metric on $\mathbb D$ and $k$ is any natural number, and in particular so that 
$$\GK_{\frac1{2t}g(t)}\longrightarrow -1\qquad\text{ uniformly as }t\to\infty.$$

Moreover, there exist a complete, conformal, immortal Ricci flow
$\bigl(g(t)\bigr)_{t\in[0,\infty)}$ on $\mathbb C$ arising from 
Theorem \ref{thm:2d-existence}, and a possibly different time
$t_1\in[1,3)$ such that \eqref{mainclaim1}  holds again.
\end{thm}

Note that by work of Chen \cite{Che09}, any complete Ricci flow on a surface has $\GK_{g(t)}\geq -\frac{1}{2t}$, so when the curvature is unbounded for later times, it is unbounded from above.

We pick our underlying Riemann surfaces to be $\mathbb D$ and $\mathbb C$ here to have examples in both the hyperbolic and parabolic cases. It is not hard to generalise to other underlying Riemann surfaces, at the cost of increased technicality. 

Our next theorem also has bearing on how the Ricci flow achieves its geometrisation, as we will explain after stating the result.

\begin{thm}
\label{thm:ex-rf-delayed-unbounded-curv}
On any noncompact underlying Riemann surface, there exists a complete, conformal, immortal Ricci flow $\bigl(g(t)\bigr)_{t\in[0,\infty)}$  arising from 
Theorem \ref{thm:2d-existence} such that
\[ \sup_\Mf \left|\GK_{g(t)}\right| \begin{cases}
<\infty & \text{for all $t\in[0,1)$} \\
=\infty & \text{for all $t\in[3,\infty)$.}
\end{cases} \]
\end{thm}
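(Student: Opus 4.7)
The plan is to choose an initial metric $g_0$ on $\mathbb C$ that is complete, has bounded curvature, and has infinite volume (so that $T=\infty$ in Theorem \ref{thm:2d-existence}), such that the classical Hamilton--Shi flow starting from $g_0$ develops spatial curvature blow-up at some finite time $t_\ast\in(1,3]$, and such that the maximally-stretched continuation provided by Theorem \ref{thm:2d-existence} --- which exists on all of $[0,\infty)$ --- retains unbounded spatial curvature for every $t\ge 3$. The two essential tools are the uniqueness statement in Theorem \ref{thm:2d-existence} (forcing $g$ to coincide with the Hamilton--Shi flow for as long as the latter is well-defined) and the maximally-stretched (monotonicity) property.

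\textbf{Step 1: initial data and bounded-curvature regime.} Design $g_0 = u_0|\df z|^2$ on $\mathbb C$ to be complete, of bounded curvature, and of infinite volume, but concentrating a large amount of positive curvature in an almost-spherical cap of area $A$ attached through a thin neck to an asymptotically cigar-like infinite-volume end. Under Ricci flow the cap area decreases essentially as $A - 8\pi t$ by Gauss--Bonnet, forcing Hamilton--Shi to develop a spatial curvature singularity at some $t_\ast>0$; by rescaling $g_0$ we arrange $t_\ast\in(1,3]$. The uniqueness part of Theorem \ref{thm:2d-existence}, combined with Shi's theorem, then gives $g(t)=g_{HS}(t)$ on $[0,t_\ast)$ with bounded spatial curvature; in particular $\sup_{\Mf}|\GK_{g(t)}|<\infty$ for every $t\in[0,1)$, as required.

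\textbf{Step 2: propagation of unbounded curvature for $t\ge 3$.} Since $T=\infty$, Theorem \ref{thm:2d-existence} extends $g$ smoothly to all $t\ge 0$. To show $\sup_\Mf|\GK_{g(t)}|=\infty$ for every $t\ge 3$, compare $g$ with an auxiliary flow $h$, also from Theorem \ref{thm:2d-existence}, starting from an \emph{incomplete} initial metric $h_0$ modelling the cuspidal geometry produced by the Hamilton--Shi blow-up. Such $h$ is a standard building block (cf.\ \cite{Top10, GT11}) of an instantaneously-completed Ricci flow with $\sup|\GK_{h(s)}|=\infty$ for every $s>0$. Using the maximally-stretched property at the blow-up time $t_\ast$, one obtains a pointwise comparison $g(t_\ast + s)\ge h(s)$ on the relevant region, and the cuspidal structure of $h$ propagates the spatial unboundedness of curvature to $g$ for every $t\ge t_\ast$, hence for every $t\ge 3$.

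\emph{Main obstacle.} The crux is Step 2. Pointwise comparison of conformal factors does not by itself yield curvature estimates, since a larger conformal factor can support smaller curvature. The substantive work is therefore to locate in $g(t)$ a region whose geometry degenerates in a quantitatively controlled fashion matching the cusp structure of $h$, and to convert this geometric comparison into the pointwise assertion $\sup|\GK_{g(t)}|=\infty$. Technically this is likely to require explicit barrier (sub- and super-) solutions of the logarithmic fast-diffusion equation $u_t=\Delta\log u$ satisfied by the conformal factor, ensuring that the cuspidal degeneracy, once triggered at $t_\ast\le 3$, is preserved under the Theorem \ref{thm:2d-existence} continuation for all subsequent times.
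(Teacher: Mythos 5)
Your proposal diverges fundamentally from the paper's construction, and the route you describe cannot work. The central difficulty is in Step~1: a \emph{single} bulb--neck--cigar configuration with bounded initial curvature cannot force the Hamilton--Shi flow to develop a spatial curvature singularity in finite time. Once the initial curvature is bounded, the neck has a fixed positive radius $r$, and when the bulb shrinks down to that scale the geometry simply transitions to a capped cylinder of circumference $\sim 2\pi r$, which has curvature of order $r^{-2}$ --- large if $r$ is small, but bounded. This is precisely what the paper establishes for the \textsc{cb}-Ricci flow: Proposition~\ref{prop:lower-curvature-bound} yields only $\sup\GK\ge r_c^{-1}$, and Lemma~\ref{lemma:plane_bounds} shows that $g(t)$ is eventually pinched between $|\dz|^2$ and $C|\dz|^2$, forcing the curvature to become bounded again at late times. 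So a single region with bounded-curvature data produces a \emph{finite} burst, not a singularity, and it certainly does not keep $\sup|\GK_{g(t)}|=\infty$ for all $t\ge 3$.

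Step~2 compounds this. Beyond the issue you yourself flag (conformal ordering does not transfer curvature bounds), the auxiliary flow $h$ you appeal to --- the instantaneously-complete flow emanating from a hyperbolic cusp --- does \emph{not} retain unbounded curvature for all $s>0$: the estimate in Lemma~\ref{lemma:cf-bd-hyp-cusp} (from \cite{Top12}) bounds the conformal factor of such a flow by $\beta/s$ on $\Disc_{1/2}$, and combined with lower barriers this gives two-sided $C^0$ control and hence bounded curvature for $s>0$. So $h$ is the wrong object for the comparison you want. The mechanism that actually makes Theorem~\ref{thm:ex-rf-delayed-unbounded-curv} work is entirely different: the paper glues in a \emph{countable family} of \textsc{cb}-regions $\Ball(p_j;R)$ with $r_c(j)\to 0$ (so curvature scales $r_c(j)^{-1}\to\infty$), and --- the key extra input over Theorem~\ref{thm:ex-rf-curv-bursts} --- takes the cylinder lengths much longer ($l_c = r_c^{-2}$ rather than $\frac18 r_c^{-1}$), so that the $j$-th burst's duration grows without bound as $j\to\infty$. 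The supremum of the curvature over $\Mf$ then remains infinite for all $t\ge 3$ by pulling from regions with larger and larger $j$. No single region ever carries infinite curvature. Without this passage from one region to an infinite family with a divergent sequence of scales, the theorem is out of reach, and your argument has no substitute for it.
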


In particular, we could consider such an example $g(t)$ on the disc $\mathbb D$, on which the Poincar\'e metric $H$ lives. Since $H$ is hyperbolic, we know from Theorem \ref{geom_thm} that 
$\frac1{2t}g(t)$ converges to $H$ smoothly locally, and in particular that the curvature of $g(t)$ converges to \emph{zero} smoothly locally as $t\to\infty$. On the other hand our Theorem \ref{thm:ex-rf-delayed-unbounded-curv} claims that the curvature remains unbounded for all $t\geq 3$. 
We deduce that although the unbounded curvature necessarily forms, the Ricci flow organises itself in order to push the regions of large curvature out to spatial infinity as $t\to\infty$.

In higher dimensions, a theorem of the generality of Theorem \ref{thm:2d-existence} cannot be true. However, one can hope to prove the existence of Ricci flows starting with unbounded-curvature manifolds with certain positivity-of-curvature conditions, and   Cabezas-Rivas and Wilking \cite{CW11} have done this for positive complex sectional curvature.
The Ricci flows in our second result, Theorem \ref{thm:ex-rf-delayed-unbounded-curv} have similar properties to 
four-dimensional Ricci flows constructed by 
the same authors \cite{CW11}. Of course, by taking the product of our examples with Euclidean space, our work immediately yields  examples also in all dimensions larger than two.

One of the consequences of the examples above is that they reveal a striking difference between the Ricci flows from Theorem \ref{thm:2d-existence} and the classical Ricci flows of Shi.
In particular, 
in the special case that $(\m, g_0)$ is complete and of bounded curvature, then in addition to the Ricci flow $g(t)$ from Theorem \ref{thm:2d-existence}, one also has the Hamilton-Shi Ricci flow $g_1(t)$ with $g_1(0)=g_0$, for $t\in [0,T_1)$, for some $T_1>0$, and by Theorem \ref{thm:2d-existence} (or earlier theory in \cite{GT11}) we must have  $T_1\leq T$ and $g_1(t)=g(t)$ for all $t\in [0,T_1)$, i.e. the classical flow agrees with our flow for as long as the classical flow exists.
We see that Theorems \ref{thm:ex-rf-curv-bursts} and \ref{thm:ex-rf-delayed-unbounded-curv} resolve
the natural question of whether it can actually occur in practice that our flow exists for a longer time interval than Shi's flow. In all the examples above, Shi's flow stops when the curvature blows up, while ours carries on
beyond the time that the curvature becomes unbounded, a property shared with the four-dimensional examples of Cabezas-Rivas and Wilking \cite{CW11} mentioned earlier.
In particular, the traditional use of the term `maximal solution' needs revision.
An additional consequence of our results is then that Shi's flow would not be able to geometrise an arbitrary complete, bounded curvature initial surface that is conformal to a hyperbolic metric $H$, whereas our flow will geometrise even a completely general metric conformal to $H$.

\begin{rmk}
\label{max_stretch}
Recall from \cite{GT11} that the \emph{maximally stretched} condition from part (iii) of Theorem \ref{thm:2d-existence} means
that if $\tilde g(t)$ is any Ricci flow on $\m$ for $t\in [0,\tilde T]$ with $\tilde g(0)\leq g(0)$ (with $\tilde g(t)$ not necessarily complete or of bounded curvature) then $\tilde g(t)\leq g(t)$ for every $t\in[0,\min\{T,\tilde T\}]$. There are several alternative, but ultimately equivalent, ways of writing this condition. For example, one could allow competitors $\tilde g(t)$ only with $\tilde g(0)=g(0)$. Alternatively, one could ask that whenever $0\leq a<b\leq T$ and $\tilde g(t)$ is any Ricci flow on $\m$ for $t\in [a,b)$ with $\tilde g(a)\leq g(a)$ (with $\tilde g(t)$ not necessarily complete or of bounded curvature) then $\tilde g(t)\leq g(t)$ for every $t\in[a,b)$. An inspection of the part of the proof of Theorem \ref{thm:2d-existence} that can be found in \cite{GT11} shows that this apparently stronger property also holds for our flow.
\end{rmk}

\begin{rmk}
Given any Ricci flow $g(t)$, $t\in [0,T)$ arising from Theorem \ref{thm:2d-existence}, and given any $t_0\in [0,T)$, the new Ricci flow $\hat g(t):=g(t+t_0)$ defined for $t\in [0,T-t_0)$ must, by uniqueness (\cite{Top13}, as in Theorem \ref{thm:2d-existence} above), be precisely the unique Ricci flow that Theorem \ref{thm:2d-existence} would produce with initial metric $\hat g(0)=g(t-t_0)$.
Applying this principle to the flow of Theorem \ref{thm:ex-rf-delayed-unbounded-curv}, with $t_0=3$,
i.e. translating the constructed Ricci flow to start at time $t=3$, we see that an initial surface of unbounded curvature need not immediately put itself in the classical situation by becoming of bounded curvature under our flow, but instead can have unbounded curvature for all time. This was originally proved in \cite{GT13} on general surfaces, with a somewhat simpler construction. Specific higher-dimensional examples of Ricci flows with this type of behaviour were constructed by Cabezas-Rivas and Wilking \cite{CW11}.
\end{rmk}

One of the underlying techniques of this paper was introduced in \cite{Top11} where a sequence of complete Ricci flows with locally-controlled curvature was constructed that converged in the Cheeger-Gromov sense to an incomplete Ricci flow. With a great deal of extra technical effort, the same ideas should allow one to construct a Ricci flow with unbounded curvature precisely on an extremely general subset of time $[0,\infty)$. For example, once one has a single Ricci flow that has curvature unbounded precisely on a time interval $[1,a]$ with $a>1$ arbitrarily close to $1$ (and with appropriate spatial asymptotics) then multiple copies of this one flow, with appropriate scaling and translation in time, could be combined together into one connected flow
with unbounded curvature on a given union of closed time intervals.

Finding Ricci flows developing unbounded curvature and maintaining this unbounded curvature as in Theorem \ref{thm:ex-rf-delayed-unbounded-curv} is technically less involved than requiring the curvature to become bounded again as in Theorem \ref{thm:ex-rf-curv-bursts},
and so we will only sketch the proof of Theorem \ref{thm:ex-rf-delayed-unbounded-curv}.

\emph{Acknowledgements:} This work was partially supported by The
Leverhulme Trust, EPSRC Programme grant EP/K00865X/1 and the SFB 878: Groups, Geometry \& Actions.

\section{Strategy of the proof}
\label{strategy_sect}

We will prove Theorem \ref{thm:ex-rf-curv-bursts} by making a very precise construction of a suitable Ricci flow, with precise constants, which is somewhat technical in parts. However it is important to digest the picture behind our construction first, and this may be enough to understand fully what is going on, without further technicality.

\begin{figure}[h]
\centering
\includegraphics{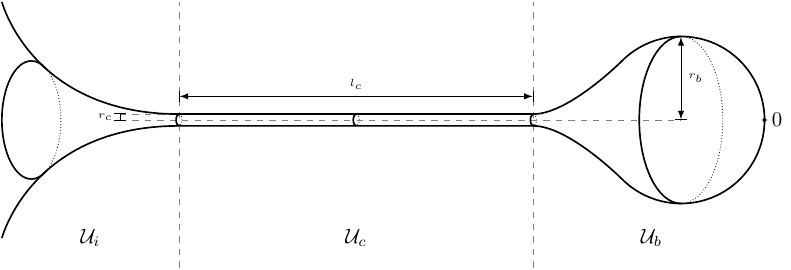}
\caption{\textsc{cb}-surface -- `lollipop': Cylinder with bulb cap}
\label{fig:g-cb}
\end{figure}

The basic building block of our construction is a `lollipop' surface, pictured in Figure \ref{fig:g-cb}, that consists of a plane that has had a local region drawn out into a long thin cylinder ($\mathcal{U}_c$ in the figure) with a bulb ($\mathcal{U}_b$) on the end. This can easily be constructed (Section \ref{ssect:construction}) to have curvature bounded uniformly above and below whilst allowing the cylinder to be as thin as we like. The length of the cylinder is chosen so that its area is of order one (i.e. the cylinder is long and thin) and the bulb on the end will be of a uniform size and area.

Consider now what happens to such a surface under Ricci flow when we flow it (for all time) using Theorem \ref{thm:2d-existence}. Because the curvature is initially bounded, and our flow initially agrees with the flow of Shi, the curvature of the flow will remain bounded for a uniform time (independent of how thin the cylinder is). The cylinder, being flat, will essentially try to remain a cylinder. (This can be made precise quickly by using Perelman's Pseudolocality Theorem \cite{Per02}, but here we will use barrier arguments.) Meanwhile, the bulb part of the surface will shrink, something like a shrinking round sphere, losing area at a constant rate. After a time proportional to the initial area of the bulb (which is controlled uniformly, independently of how small the radius of the cylinder might be) the Ricci flow will now look similar to how it looked initially, up to and including the long thin cylinder, but now instead of having a big bulb attached to the end, we will show that it will now have a small cap attached ($\to$ Figure \ref{fig:g-cb2}).

\begin{figure}[h]
\centering
\includegraphics{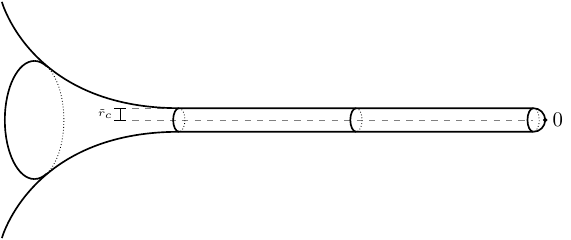}
\caption{\textsc{cb}-surface at some later time: Cylinder with (small) cap }
\label{fig:g-cb2}
\end{figure}

This point in time marks a transition for the flow. From now on, for a uniformly controlled time, the capped cylinder will evolve somewhat like a cigar soliton (\cite[\S 1.2.2]{Top06}). It will keep on looking like a capped cylinder, but the length of that cylinder will shrink at a constant rate. During this phase, if the `radius' of the cylinder is $r_c$, the curvature of the flow should be of the order of $r_c^{-2}$, i.e. large.

In a uniformly controlled amount of time, the cylinder will disappear entirely, and the flow will look like a plane with a truncated hyperbolic cusp attached ($\to$ Figure \ref{fig:g-cb3}) in some local region.
This moment marks a second phase transition for the flow. Following \cite{Top12} we show that this hyperbolic cusp contracts in a uniformly controlled way (independently of how thin the cylinder was, and thus how long the cusp is) and the curvature returns to being controlled independently of the radius $r_c$.

\begin{figure}[h]
\centering
\includegraphics{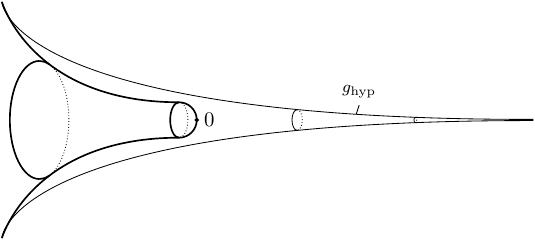}
\caption{\textsc{cb}-surface at some further later time: Cusp with cap}
\label{fig:g-cb3}
\end{figure}

What this construction yields is almost what is required in Theorem \ref{thm:ex-rf-curv-bursts}, except instead of the curvature being infinite on the intermediate time interval, it is of order $r_c^{-2}$. However, by gluing together infinitely many copies of this basic construction, with smaller and smaller radii $r_c$, our job is done. Note that it turns out not to be too difficult, using pseudolocality technology, to show that the different copies of the basic building block described above do not influence each other too much.

Of course there are a number of things to check to be sure that this construction can be carried through. Barrier arguments turn out to be very useful, and some of these can be recycled from \cite{GT13, Top12, Top11}. However, barrier arguments alone did not seem to be sufficient to force the bulb part of the construction above to shrink to nothing in a controlled time. Instead, in Section \ref{width_sect}  we introduce a novel and very simple technique where we combine the Ricci flow with the double-speed mean curvature flow (i.e. the curve shortening flow) to derive the required width estimate. Roughly, we allow a small loop in the cylinder part $\mathcal{U}_c$ to flow over the bulb part $\mathcal{U}_b$, arguing that it must achieve this in a controlled amount of time without its length increasing significantly. The evolving curve acts as a noose, squashing thin the bulb.

Finally, we remark that the Ricci flow of the surface we describe here is also a useful example in the study of Harnack inequalities for Ricci flow, as we will describe elsewhere.

\section{Construction}
\label{ssect:construction}

In order to define a metric like in Figure \ref{fig:g-cb} let us first describe the required ingredients and then put them together explicitly in Definition \ref{defn:cb-metric}. Starting with a flat cylinder ($\to\,\mathcal{U}_c$) of radius $r_c$ and length $l_c$, we cap it off on one side by a bulb ($\to\,\mathcal{U}_b$) of radius $r_b=\sqrt2$, and on the other side we interpolate ($\to\,\mathcal{U}_i$) it with the flat (complex) plane $\bigl(\mathbb C,|\dz|^2\bigr)$. In order to do so, we are going to connect the cylinder with hyperbolic approximated cusps on both sides.  The one in $\mathcal{U}_b$ can be joined to a round sphere and the other one in $\mathcal{U}_i$ to the flat plane $\mathcal{U}_e=\mathbb C\setminus\overline{\mathcal{U}_i\cup\mathcal{U}_c\cup\mathcal{U}_b}$. To see that each transition is at least differentiable we are going to specify piecewisely the conformal factor $\ee^{2u(s,\theta)}\bigl(\ds^2+\df\theta^2\bigr)$ of each building block in logarithmic cylindrical coordinates $(s,\theta)\in\mathbb R\times[0,2\pi)$:
\begin{align}
\tag{\text{flat plane}}
\mathbb R\ni s &\mapsto -s \\
\tag{\text{cylinder of radius $r_c>0$}}
\mathbb R\ni s &\mapsto \log r_c \\
\tag{\text{round sphere of radius $r_b=\sqrt{2}$}}
\mathbb R\ni s &\mapsto -\log\cosh(s) + \log r_b \\
\tag{\text{hyperbolic approximated cusp}}   \textstyle\left(-\frac\pi{2r_c},\frac\pi{2r_c}\right)\ni s &\mapsto   -\log\left(r_c^{-1}\cos(r_cs)\right)
\end{align}

\begin{figure}[h]
\centering
\includegraphics{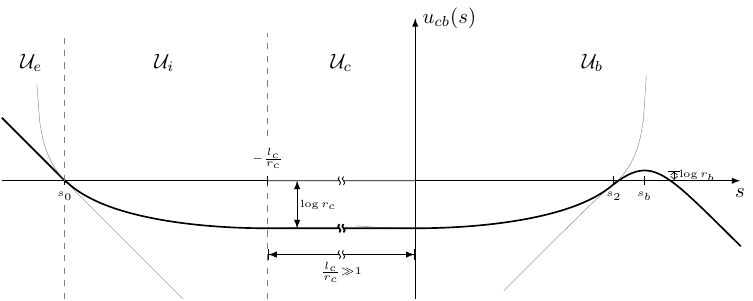}
\caption{Conformal factor of a cylinder with bulb cap in cylindrical     coordinates}
\label{fig:u-cb}
\end{figure}

\begin{defn}
\label{defn:cb-metric}
For a given length $l_c>0$ and radius $r_c\in(0,1)$ the  \textsc{cb}-surface is the rotationally symmetric plane  $\bigl(\mathbb C,g_{\mathsf{cb}}\bigr)$ where  $g_{\mathsf{cb}}=\ee^{2u_{\mathsf{cb}}(s)}\bigl(\ds^2+\df\theta^2\bigr)$ has in logarithmic cylindrical coordinates $z=\ee^{-s+s_e+\mathrm{i}\theta}$ the form
\begin{equation}
\label{eq:def-u_cb}
\setlength{\arraycolsep}{.7ex}
u_{\mathsf{cb}}(s) = \left\{\begin{array}{llll}
u_1(s) =-s + s_e& \text{for $s\in(-\infty,s_0]$}
&&\simeq\overline{\mathcal{U}_e} \\
u_2(s) = -\log\left(r_c^{-1} \cos(r_cs+l_c) \right)
& \text{for $s\in\left(s_0,-\nicefrac{l_c}{r_c}\right)$}
&& \simeq\mathcal{U}_i\\
u_3(s) = -\log r_c^{-1} & \text{for         $s\in\left[-\nicefrac{l_c}{r_c},0\right]$}
&& \simeq\overline{\mathcal{U}_c}\\
u_4(s) = -\log\left(r_c^{-1} \cos(r_cs) \right) & \text{for
$s\in\bigl(0,s_2\bigr]$}
&\multirow{2}{*}{$\biggr\}$}
&\multirow{2}{*}{$\simeq\mathcal{U}_b\setminus\{0\}$}\\
u_5(s) = -\log\cosh(s-s_b) + \frac12\log2 & \text{for                             $s\in(s_2,\infty)$},
\end{array}\right.
\end{equation}
with parameters   $s_0\in\left(-\frac{l_c+\frac\pi2}{r_c},-\frac{l_c}{r_c}\right)$,   $s_e\in\left(s_0,-\frac{l_c}{r_c}\right)$,   $s_2\in\left(0,\frac\pi{2r_c}\right)$ and $s_b>s_2$ chosen uniquely such that $u_{\mathsf{cb}}\in\Cts^1(\mathbb R)$.

For these parameters $r_c$ and $l_c$, we define the \textsc{cb}-Ricci flow to be the complete Ricci flow starting from the \textsc{cb}-surface, that is given by Theorem \ref{thm:2d-existence}.
\end{defn}

Taking a geometric viewpoint, it is easy to verify the existence of the \textsc{cb}-metric without computation -- essentially one takes the three middle parts ($u_2$, $u_3$, $u_4$) stretching from $-\frac{l_c+\nicefrac\pi2}{r_c}$ all the way to $\frac\pi{2r_c}$ (which automatically combine to give a $\Cts^1$ function) and then moves in the graph of $s\mapsto -s$ from the left until it touches, and moves the graph $s\mapsto -\log\cosh(s) + \frac12\log2$ in from the right until it touches.

\begin{lemma}[\textsf{Properties of \textsc{cb}-surface}]
\label{lemma:cb-prop}
For given length $l_c>0$ and radius $r_c\in\left(0,\nicefrac1{10}\right)$ the \textsc{cb}-surface   $\bigl(\mathbb C,g_{\mathsf{cb}}\bigr)$ has the following properties:
\begin{compactenum}[(i)]
\item $\GK_{g_{\mathsf{cb}}}\in\bigl[-1,\nicefrac12\bigr]$ a.e.
\item $s_0=-r_c^{-1}\left(l_c+\arctan r_c^{-1}\right)$ and       $s_e=s_0+\frac12\log\bigl(1+ r_c^2\bigr)$; in particular
\[
\Disc_{\sqrt{1+r_c^2}}=\mathcal{U}_i\cup\overline{\mathcal{U}_c}\cup\mathcal{U}_b \]
\item $s_b\le\frac74r_c^{-1}$
\item $\Vol_{g_{\mathsf{cb}}}\mathcal{U}_b < 10\pi$
\end{compactenum}
\end{lemma}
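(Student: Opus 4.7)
\textbf{Plan for the proof of Lemma \ref{lemma:cb-prop}.} All four claims reduce to piecewise computations in the coordinate $s$, exploiting the fact that for a conformal metric $\ee^{2u(s)}\bigl(\ds^2+\df\theta^2\bigr)$ on a cylinder, $\GK=-\ee^{-2u}u''$ and $\df\Vol=\ee^{2u}\ds\df\theta$.

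For \emph{part (i)}, I simply plug each piece $u_i$ into $-\ee^{-2u_i}u_i''$. One checks that $u_1''=u_3''\equiv 0$ (so $\GK=0$ on $\overline{\mathcal{U}_e}$ and $\overline{\mathcal{U}_c}$); that $u_2$ and $u_4$ satisfy $u'=r_c\tan(\cdot)$ and $u''=r_c^2\sec^2(\cdot)$, giving $\GK\equiv -1$ on the approximated cusps; and that $u_5'=-\tanh(s-s_b)$, $u_5''=-\mathop{\mathrm{sech}}^2(s-s_b)$, with $\ee^{-2u_5}=\cosh^2(s-s_b)/2$, giving $\GK\equiv\nicefrac12$ on $\mathcal{U}_b\setminus\{0\}$. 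Since $u_{\mathsf{cb}}\in\Cts^1$ but only $\Cts^0$ in the second derivative, the bound $\GK\in[-1,\nicefrac12]$ holds a.e.

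For \emph{part (ii)}, the $\Cts^1$ matching at $s_0$ between $u_1$ and $u_2$ gives $u_1'(s_0)=-1=r_c\tan(r_cs_0+l_c)$, forcing $r_cs_0+l_c=-\arctan r_c^{-1}$ (the branch is fixed by the range of $s_0$), hence $s_0=-r_c^{-1}(l_c+\arctan r_c^{-1})$. Substituting into $u_1(s_0)=u_2(s_0)$ and using $\cos(\arctan r_c^{-1})=r_c/\sqrt{1+r_c^2}$ yields $s_e=s_0+\tfrac12\log(1+r_c^2)$. Since the change of variables $z=\ee^{-s+s_e+\mathrm{i}\theta}$ means the region $\{s\ge s_0\}$ corresponds to $|z|\le\ee^{s_e-s_0}=\sqrt{1+r_c^2}$, the disc identity follows.

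\emph{Part (iii)} is the most delicate step. Matching $u_4$ and $u_5$ at $s_2$ in $\Cts^1$, and writing $\alpha:=r_cs_2\in(0,\nicefrac\pi2)$, $\beta:=s_b-s_2>0$, the two matching conditions become
\[ \tanh\beta=r_c\tan\alpha, \qquad \cosh\beta=\tfrac{\sqrt{2}\cos\alpha}{r_c}. \]
Multiplying gives $\sinh\beta=\sqrt{2}\sin\alpha$, and combining with $\cosh^2\beta-\sinh^2\beta=1$ yields the explicit formula
\[ \sin^2\alpha=\frac{2-r_c^2}{2(1+r_c^2)}, \]
which has a unique solution $\alpha\in(0,\nicefrac\pi2)$, proving well-posedness. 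Since $\sinh\beta\le\sqrt{2}$, we get $\beta\le\arsinh\sqrt{2}=\log(\sqrt{2}+\sqrt{3})<1.15$. Then $s_b=\alpha/r_c+\beta<\pi/(2r_c)+\arsinh\sqrt{2}$, and for $r_c<\nicefrac1{10}$ the right-hand side is $<(1.5708+0.115)r_c^{-1}<\nicefrac{7}{4}\,r_c^{-1}$.

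For \emph{part (iv)}, I compute the volume of $\mathcal{U}_b=\{s>0\}$ directly:
\[ \Vol\mathcal{U}_b=2\pi\int_0^{s_2}\frac{r_c^2}{\cos^2(r_cs)}\ds+2\pi\int_{s_2}^\infty\frac{2}{\cosh^2(s-s_b)}\ds=2\pi r_c\tan\alpha+4\pi(1+\tanh\beta). \]
Using $r_c\tan\alpha=\tanh\beta$ from (iii), this equals $4\pi+6\pi\tanh\beta<10\pi$.

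The only step with any subtlety is the numerical bound in (iii); everything else is routine one-variable calculus, so that is where I would focus the write-up.
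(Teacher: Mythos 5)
Your proof is correct, and parts (i) and (ii) coincide with the paper's. For parts (iii) and (iv), however, you take a genuinely different route. The paper avoids solving the $\Cts^1$-matching equations at $s_2$: it shifts $u_5$ to the right to a $\widetilde u_5$ so that a straight line $L$ of slope $1$, tangent to $u_4$ at $\widetilde s_2 = r_c^{-1}\arctan r_c^{-1}$, separates the graphs of $u_4$ and $\widetilde u_5$; this gives $s_b < \widetilde s_b = \widetilde s_2 + \tfrac12\log\tfrac{8}{1+r_c^2}$ and thence (iii), and for (iv) it enlarges the first integral to $[0,\widetilde s_2]$ and shrinks the second to $[\widetilde s_2,\infty)$ using $u_5\le u_4$ on $[s_2,\widetilde s_2]$, arriving at $\tfrac{82}{9}\pi$. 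You instead solve the matching system exactly: writing $\alpha = r_c s_2$, $\beta = s_b - s_2$, you derive $\tanh\beta = r_c\tan\alpha$, $\cosh\beta = \sqrt{2}\cos\alpha/r_c$, hence $\sinh\beta = \sqrt{2}\sin\alpha$ and the closed-form $\sin^2\alpha = \tfrac{2-r_c^2}{2(1+r_c^2)}$, which gives both well-posedness of $s_2, s_b$ and the uniform bound $\beta \le \arsinh\sqrt{2}$, from which (iii) follows. Your (iv) then becomes an exact computation: $\Vol\mathcal{U}_b = 2\pi r_c\tan\alpha + 4\pi(1+\tanh\beta) = 4\pi + 6\pi\tanh\beta < 10\pi$, which is cleaner than the paper's comparison bound. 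The trade-off is that the paper's geometric argument ``moves one graph until it touches'' works without solving the transcendental system, whereas your approach requires seeing the right substitution but then produces sharper, explicit formulas; both establish the same qualitative fact that the bounds are uniform in $r_c$.
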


\begin{figure}[t]
\centering
\includegraphics{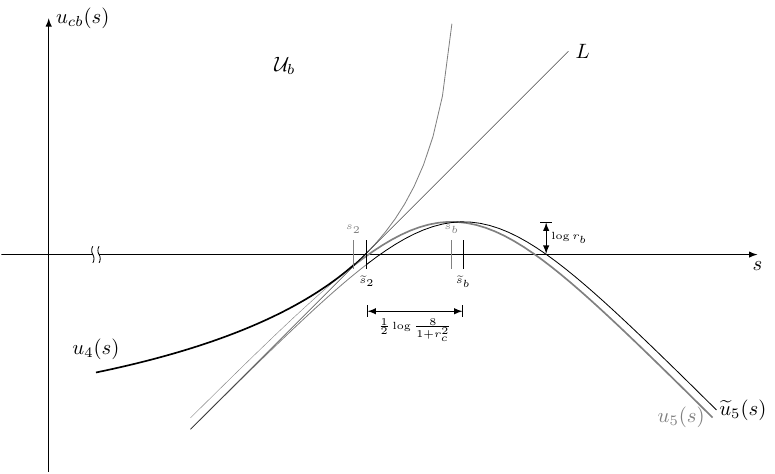}
\caption{Estimate of $s_b$ in Lemma \ref{lemma:cb-prop}}
\label{fig:estitmate-ub}
\end{figure}

\begin{proof}
(i) follows from the fact that the curvatures of the segements in   \eqref{eq:def-u_cb} take the constant values $0$, $-1$, $0$, $-1$   and $\frac12$ respectively. Since $u_{\mathsf{cb}}\in\Cts^1(\mathbb   R)$, we compute $s_0=-r_c^{-1}\left(l_c+\arctan r_c^{-1}\right)$ by   solving the equation $u_2'(s_0)=u_1'\equiv-1$ for   $s_0\in\left(-\frac{l_c+\frac\pi2}{r_c},-\frac{l_c}{r_c}\right)$.   Consequently, we also have $s_e=s_0 + u_1(s_0) = s_0 + u_2(s_0) =s_0   + \frac12\log\left(1+r_c^2\right)$, i.e. (ii).

Instead of an explicit formula for $s_b$ (and $s_2$), it suffices to get upper bounds, which we estimate as follows: As shown in Figure \ref{fig:estitmate-ub} we shift $u_5$ slightly to the right such that a straight line $L$ parallel to $s\mapsto s$ fits exactly inbetween $u_4$ and the shifted $u_5$. Let us define $\widetilde s_b$ by denoting the shifted $u_5$ as 
$$\widetilde u_5(s) = -\log\cosh(s-\widetilde s_b) + \frac12\log2.$$
The line $L$ touches $u_4$ at $\widetilde s_2:=r_c^{-1}\arctan r_c^{-1}$ (i.e. $\widetilde s_2$ is a solution of $u_4'(\widetilde s_2)=1$), hence the explicit formula for $L$ is $s\mapsto s-\widetilde s_2+u_4(\widetilde s_2)= s-\widetilde s_2 +\frac12\log\left(1+r_c^2\right)$.
This determines $\widetilde s_b:=\widetilde s_2 -\frac12\log\left(1+r_c^2\right) +\frac32\log2 = \widetilde s_2 +\frac12\log\frac8{1+r_c^2}$. Note that the graph of $u_4$ lying above $L$ means that
$$u_4(s) \ge\textstyle s-\widetilde s_2+\frac12\log\left(1+r_c^2\right) \qquad\textstyle\text{for all }s\in\left[0,\frac\pi{2r_c}\right)$$
while $L$ lying above $\widetilde u_5$ means that 
$$\textstyle s-\widetilde s_2 +\frac12\log\left(1+r_c^2\right)
\ge -\textstyle\log\cosh(s-\widetilde s_b)+\frac12\log2 \qquad \text{for all }s\in[0,\infty).$$
Because $s_b<\widetilde s_b$ (we shifted $u_5$ to the right by $\widetilde s_b - s_b$) we can estimate
\begin{align}
\label{eq:sb-tsb-estimate}
s_b < \widetilde s_b &= \widetilde s_2 +\frac12\log\frac8{1+r_c^2}   \le \widetilde s_2 + \frac32\log2 \le \frac\pi{2r_c} + \frac32\log2   \le \frac74r_c^{-1},
\end{align}
because $r_c<\frac{1}{10}$, thus proving (iii).  Since $u_5'(s)<1$ for all $s\in\mathbb R$ and $u_4'(s)\ge1$ for all $s\ge\widetilde s_2$, we know that $s_2<\widetilde s_2$, and also $u_5(s)\le u_4(s)$ for all $s\in[s_2,\widetilde s_2]$. Therefore,
\begin{align*}
\Vol_{g_{\mathsf{cb}}}\mathcal{U}_b
&= 2\pi\int_0^{s_2} \ee^{2u_4(s)}\,\ds
+ 2\pi\int_{s_2}^\infty \ee^{2u_5(s)}\,\ds \\
&\le 2\pi\int_0^{s_2} \ee^{2u_4(s)}\,\ds +   2\pi\int_{s_2}^{\widetilde s_2} \ee^{2u_4(s)}\,\ds
+ 2\pi\int_{\widetilde s_2}^\infty \ee^{2u_5(s)}\,\ds \\
&= 2\pi\int_0^{\widetilde s_2}   \frac{r_c^2}{\bigl(\cos(r_cs)\bigr)^2}\, \ds + 2\pi\int_{\widetilde     s_2}^\infty
\frac2{\bigl(\cosh(s-s_b)\bigr)^2}\,\ds \\
&= 2\pi + 4\pi\bigl(1 + \tanh(s_b-\widetilde s_2)\bigr)   \stackrel{\eqref{eq:sb-tsb-estimate}}{<} 2\pi \left(3 +     2\tanh\left(\frac32\log2\right)\right) = \frac{82}9\pi < 10\pi.
\end{align*}
concluding (iv).
\end{proof}

Note that the essential point in (iv) is that we have a uniform upper bound for the volume that is independent of $r_c$, and this fact is essentially obvious from the construction if one thinks geometrically.

\section{Width estimate using curve shortening flow}
\label{width_sect}

Under Ricci flow, the bulb part of the \textsc{cb}-surface will shrink, and intuitively the area will shrink at a constant rate of about $4\pi$, independently of $r_c$. The precise result we need is that after a definite amount of time, the `width' of the bulb is similar to the width of the cylinder, and to achieve this we use a combination of the curve shortening flow and the Ricci flow.

\begin{prop}
\label{prop:finite-csf}
On a surface $\Mf^2$ let $\mathcal{U}_0\Subset\Mf$ be a simply
connected domain with smooth boundary $\partial\mathcal{U}_0$.
For $T=\frac1{4\pi}\Vol_{g(0)}\mathcal U_0$
let $\bigl(g(t)\bigr)_{t\in[0,T]}$ be a smooth Ricci flow on
$\Mf$ and $\bigl(\gamma(t)\bigr)_{t\in[0,T)}$ be a smooth embedded solution to the
(double speed) curve shortening flow within $\bigl(\Mf,g(t)\bigr)$
starting from the closed curve along $\partial\mathcal{U}_0$, i.e.
\begin{equation}
\label{eq:double-csf}
\left\{
\begin{aligned}
\pddt\gamma(t) &= -2\kappa_{g(t),\gamma(t)}\nu_{g(t),\gamma(t)}\\
\im \gamma(0) &= \partial\mathcal U_0,
\end{aligned}\right.
\end{equation}
where $\nu$ is the `outward' unit normal to $\gamma$ and $\kappa$ is the corresponding geodesic curvature.
Then $\Vol_{g(t)}\mathcal{U}_t =4\pi(T-t)$ for all
$t\in[0,T]$, where $\mathcal{U}_t$ is the simply connected
domain with boundary $\partial\mathcal{U}_t=\im\gamma(t)$ (on the same side as $\mathcal{U}_0$).
\end{prop}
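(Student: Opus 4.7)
My plan is to differentiate $\Vol_{g(t)}\mathcal U_t$ directly, splitting the rate of change into two contributions---one from the Ricci flow deformation of the area measure, one from the inward motion of the boundary---and to collapse their sum to a purely topological quantity via the Gauss--Bonnet theorem. Concretely, the transport formula for an integral over a moving domain with a time-dependent metric gives
\begin{equation*}
\ddt\Vol_{g(t)}\mathcal U_t
=\int_{\mathcal U_t}\pddt\df A_{g(t)}
+\int_{\gamma(t)}\bigl\langle V,\nu_{g(t),\gamma(t)}\bigr\rangle_{g(t)}\,\dell_{g(t)},
\end{equation*}
where $V=-2\kappa_{g(t),\gamma(t)}\nu_{g(t),\gamma(t)}$ is the velocity of the boundary supplied by \eqref{eq:double-csf}. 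Two-dimensional Ricci flow yields $\pddt\df A_{g(t)}=-R_{g(t)}\df A_{g(t)}=-2\GK_{g(t)}\df A_{g(t)}$, so combining the two contributions,
\begin{equation*}
\ddt\Vol_{g(t)}\mathcal U_t
=-2\int_{\mathcal U_t}\GK_{g(t)}\df A_{g(t)}
-2\int_{\gamma(t)}\kappa_{g(t),\gamma(t)}\,\dell_{g(t)}.
\end{equation*}

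Next, because $\mathcal U_0$ is simply connected and $\gamma(t)$ is a smooth embedded closed curve for every $t\in[0,T)$, the enclosed domain $\mathcal U_t$ remains a topological disk and $\chi(\mathcal U_t)=1$. Gauss--Bonnet applied to the smooth compact disk $\bigl(\mathcal U_t,g(t)\bigr)$ with boundary $\gamma(t)$ then gives
\begin{equation*}
\int_{\mathcal U_t}\GK_{g(t)}\df A_{g(t)}+\int_{\gamma(t)}\kappa_{g(t),\gamma(t)}\,\dell_{g(t)}=2\pi,
\end{equation*}
so $\ddt\Vol_{g(t)}\mathcal U_t\equiv-4\pi$. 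Integrating in time and using $T=\tfrac1{4\pi}\Vol_{g(0)}\mathcal U_0$ yields $\Vol_{g(t)}\mathcal U_t=4\pi(T-t)$ on $[0,T)$; the equality at $t=T$ follows by continuity, with $\Vol_{g(T)}\mathcal U_T=0$.

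I do not anticipate a genuine obstacle---the proposition is essentially a designed coincidence: the factor $2$ in the \emph{double}-speed CSF is exactly what makes the metric's Ricci deformation ($-2\int\GK$) and the boundary transport term ($-2\int\kappa$) pair up into $-2\cdot2\pi\chi$ via Gauss--Bonnet. The only delicate point is justifying the first-variation computation with a time-dependent metric on a time-dependent domain, but this is routine given the assumed smoothness and embeddedness of $\gamma(t)$ throughout $[0,T)$.
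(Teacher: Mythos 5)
Your proof is correct and is essentially the paper's own argument: differentiate the moving-domain area using the transport/Leibniz formula, combine the Ricci-flow variation $-2\int\GK$ with the boundary term $-2\int\kappa$ coming from the double-speed CSF, and collapse the sum to $-4\pi\chi(\mathcal U_t)=-4\pi$ via Gauss--Bonnet, then integrate. The only cosmetic difference is bookkeeping: the paper substitutes Gauss--Bonnet into the bulk term before adding the boundary term, while you group both integrals first and then invoke Gauss--Bonnet once.
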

\begin{proof}
We calculate using the generalised Leibniz integral rule, the
Gauss-Bonnet Theorem and the fact that 
$\frac{d}{dt}\dmu_{g(t)} = -2 \GK_{g(t)}\dmu_{g(t)}$ 
(see \cite[(2.5.7)]{Top06})
\begin{align*}
\ddt \int_{\mathcal{U}_t} \dmu_{g(t)} &= -2\int_{\mathcal{U}_t}
\GK_{g(t)}\dmu_{g(t)} + \int_{\im\gamma(t)}
\left\langle\pddt\gamma(t),\nu_{g(t),\gamma(t)}\right\rangle_{\!\!g(t)}
\,\ds\\
&= -4\pi\mathop\chi(\mathcal U_t) + 2\int_{\partial\mathcal U_t}
\kappa_{g(t)}\,\ds - \int_{\im\gamma(t)} 2\kappa_{g(t),\gamma(t)}
\,\ds = -4\pi.
\end{align*}
Therefore, $\Vol_{g(t)}\mathcal U_t = \Vol_{g(0)}\mathcal U_0
- 4\pi t = 4\pi(T-t)$ for all $t\in[0,T]$.
\end{proof}

This simple principle will be behind the following width estimate.
It will control the length of any circle at \emph{some} time, and then our lower curvature bounds will give control on its length for all later times, and in particular at the time $T$ that we expect the bulb to be extinguished.

\begin{lemma}
\label{lemma:width-estimate}
For some $r_c>0$ and $l_c>1$ let
$\bigl(g(t)\bigr)_{t\in[0,\infty)}$ be the \textsc{cb}-Ricci flow (i.e. the complete Ricci flow on $\mathbb C$ from Theorem
\ref{thm:2d-existence} starting from the corresponding \textsc{cb}-metric). Then we have the `width'
estimate
\begin{equation}
\label{eq:width-estimate}
\width_{g(T)}
\left(\mathcal{U}_c\cup\overline{\mathcal{U}_b}\right)
:= \max_{r\in[0,l_b+l_c]} \L_{g(T)}\partial\gBall_{g(0)} (0;r)
\le 2\pi\sqrt{2T+1}\,r_c,
\end{equation}
where $T=\frac1{4\pi}\Vol_{g(0)}\mathcal{U}_b$,
$l_b=\dist_{g(0)}\bigl(0,\partial\mathcal{U}_b\bigr)$ and
$\L_{g(T)}$ represents the length.
\end{lemma}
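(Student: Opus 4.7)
I apply Proposition \ref{prop:finite-csf} with $\mathcal{U}_0=\mathcal{U}_b$, producing an embedded double-speed CSF curve $\gamma(t)$ in $(\mathbb{C},g(t))$ that starts at the neck $\partial\mathcal{U}_b$ and shrinks until the enclosed $g(t)$-volume vanishes at $t=T=\frac{1}{4\pi}\Vol_{g(0)}\mathcal{U}_b$. Because $g_{\mathsf{cb}}$ and $\partial\mathcal{U}_b$ are rotationally symmetric and both the Ricci flow (by the uniqueness clause of Theorem \ref{thm:2d-existence}) and double-speed CSF preserve rotational symmetry, $\gamma(t)$ is a latitude circle for every $t$. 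Hence there is a monotone map $r\colon[0,T)\to(0,l_b]$ with $r(0)=l_b$ and $r(t)\downarrow 0$ as $t\uparrow T$ such that $\gamma(t)=\partial\gBall_{g(0)}(0;r(t))$; in particular $r$ is surjective onto $(0,l_b]$.

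The key ingredient is a propagated lower Gauss-curvature bound: starting from $\GK_{g(0)}\geq -1$ (Lemma \ref{lemma:cb-prop}(i)) and applying a maximum principle to $\pddt\GK=\lap_{g(t)}\GK+2\GK^2$ on our complete, maximally stretched flow, I obtain
\[
\GK_{g(t)}\geq -\frac{1}{1+2t}\qquad\text{for all }t\geq 0.
\]
Since $\pddt g=-2\GK g$, any fixed closed curve $\beta$ satisfies
\[
\ddt\L_{g(t)}(\beta)=-\int_\beta\GK_{g(t)}\,\ds_{g(t)}\leq\frac{\L_{g(t)}(\beta)}{1+2t},
\]
while the CSF curve $\gamma(t)$ satisfies
\[
\ddt\L_{g(t)}(\gamma(t))=-2\int_{\gamma(t)}\kappa_{g(t)}^2\,\ds_{g(t)}-\int_{\gamma(t)}\GK_{g(t)}\,\ds_{g(t)}\leq\frac{\L_{g(t)}(\gamma(t))}{1+2t}.
\]
In both cases, Gronwall yields $\L_{g(t_2)}\leq\L_{g(t_1)}\sqrt{(1+2t_2)/(1+2t_1)}$ for $0\leq t_1\leq t_2$.

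For a cylinder latitude $r\in[l_b,l_b+l_c]$ the curve $\partial\gBall_{g(0)}(0;r)$ sits in the flat cylinder $\mathcal{U}_c$, so $\L_{g(0)}=2\pi r_c$ and the fixed-curve inequality from $0$ to $T$ immediately gives $\L_{g(T)}\partial\gBall_{g(0)}(0;r)\leq 2\pi r_c\sqrt{1+2T}$. For a bulb latitude $r\in[0,l_b]$ pick $t_r\in[0,T)$ with $r(t_r)=r$, so that $\gamma(t_r)$ coincides with $\partial\gBall_{g(0)}(0;r)$ as a subset of $\mathbb{C}$. Applying the CSF inequality on $[0,t_r]$ (using $\L_{g(0)}(\gamma(0))=2\pi r_c$) and then the fixed-curve inequality to $\beta=\partial\gBall_{g(0)}(0;r)$ on $[t_r,T]$,
\[
\L_{g(T)}\partial\gBall_{g(0)}(0;r)\leq 2\pi r_c\sqrt{1+2t_r}\cdot\sqrt{\frac{1+2T}{1+2t_r}}=2\pi r_c\sqrt{1+2T},
\]
which combined with the cylinder case yields \eqref{eq:width-estimate}.

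The principal technical obstacles are the two auxiliary facts: existence (and rotational symmetry) of a smooth CSF solution on $[0,T)$, and a maximum principle for the lower curvature bound on a non-compact flow whose initial metric is only $\mathrm{C}^1$ at the interfaces. The first reduces by rotational symmetry to a quasilinear scalar ODE/PDE for $r(t)$ which is standard once $g(t)$ is smooth for $t>0$ (possibly after a brief smoothing approximation of $g_{\mathsf{cb}}$ near the junctions); the second follows from a barrier/Omori--Yau-type argument on our maximally stretched flow, in the spirit of \cite{GT11,Top10}.
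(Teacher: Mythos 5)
Your proposal is correct and follows essentially the same route as the paper: apply Proposition \ref{prop:finite-csf} to $\mathcal{U}_b$ to get a rotationally symmetric double-speed CSF noose, use the lower curvature bound $\GK_{g(t)}\geq -\frac{1}{2t+1}$ (the paper cites Chen's Theorem \ref{thm:chen-lower-curv-bd} directly rather than re-deriving it), integrate the two length-evolution inequalities, and for each latitude splice the CSF estimate on $[0,t_r]$ with the fixed-curve estimate on $[t_r,T]$. The only cosmetic difference is that you assert monotonicity of $\rho(t)$, which is neither needed nor obviously true; the paper sidesteps this by taking $t_r:=\max\{t:\im\gamma(t)=\partial\gBall_{g(0)}(0;r)\}$, relying only on continuity and $\rho(t)\to 0$.
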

In cylindrical coordinates using the convention from
\S\ref{ssect:construction} and writing
$g(t)=\ee^{2u(t,s)}\bigl(\ds^2+\df\theta^2\bigr)$, we can state the
`width' estimate
\eqref{eq:width-estimate} in terms of the conformal factor:
\begin{equation}
\label{eq:width-estimate-u}
\tag{\ref{eq:width-estimate}'}
u(T,s) \le \log r_c + \frac12\log\bigl(2T+1\bigr)
\qquad\text{for all $\textstyle s\in\left(-\frac{l_c}{r_c},\infty\right)$.}
\end{equation}

\begin{proof}
Let $\bigl(\gamma(t)\bigr)_{t\in[0,T)}$ be the rotationally symmetric  solution to \eqref{eq:double-csf} starting from
$\im\gamma(0)=\partial\mathcal{U}_b$.
Define a
time-dependent radius $\rho:[0,T)\to(0,\infty)$ such that
$\partial\gBall_{g(0)}\bigl(0;\rho(t)\bigr)=\im\gamma(t)$ for all
$t\in[0,T)$. Now estimate using Chen's Theorem
\ref{thm:chen-lower-curv-bd} and the fact that $\GK_{g(0)}\geq -1$
\begin{align*}
\ddt \L_{g(t)}\gamma(t) &= - \int_{\im\gamma(t)}
\left( \GK_{g(t)} +2\kappa_{g(t)}^2\right) \,\ds
\le \frac1{2t+1} \L_{g(t)}\gamma(t),
\end{align*}
which we integrate to
$\L_{g(t_2)}\gamma(t_2) \le \frac{\sqrt{2t_2+1}}{\sqrt{2t_1+1}}\,
\L_{g(t_1)}\gamma(t_1)$ for all $0\le t_1<t_2<T$.
Similarly, for each fixed $t_0\in [0,T)$,
\begin{align}
\label{banana}
\ddt \L_{g(t)}\gamma(t_0) &= - \int_{\im\gamma(t_0)}
\GK_{g(t)} \,\ds
\le \frac1{2t+1} \L_{g(t)}\gamma(t_0),
\end{align}
which integrates
$\L_{g(t_2)}\gamma(t_0) \le \frac{\sqrt{2t_2+1}}{\sqrt{2t_1+1}}\,
\L_{g(t_1)}\gamma(t_0)$ for all $0\le t_1<t_2<T$.

Fix any radius $r\in(0,l_b)$ at which we would like to estimate the width. By Proposition \ref{prop:finite-csf} (and
symmetry) we see that
$\rho(t)\searrow0$ as $t\nearrow T$, so by continuity we find
a time $t_r:=\max\bigl\{t\in[0,T]:\im\gamma(t)
= \partial\gBall_{g(0)}(0;r)\bigr\}$ when the curve coincides with
the boundary of the $r$-ball. Hence, we estimate
\begin{align*}
\L_{g(T)}\partial\gBall_{g(0)} (0;r) &\le
\sqrt{\frac{2T+1}{2t_r+1}}\,\L_{g(t_r)}
\partial\gBall_{g(0)} (0;r)
=\sqrt{\frac{2T+1}{2t_r+1}}\,\L_{g(t_r)}
\gamma(t_r)\\
&\le \sqrt{2T+1}\,\L_{g(0)}\gamma(0)
= \sqrt{2T+1}\,2\pi\, r_c.
\end{align*}
Finally, the principle behind \eqref{banana} shows that $\L_{g(T)}\partial\gBall_{g(0)} (0;r) \le
2\pi\sqrt{2T+1}\,r_c$ for all $r\in[l_b,l_b+l_c]$.
\end{proof}

\section{Barriers}
\label{barrier_sect}

First we show that the bulb does not shrink too fast.

\begin{lemma}[\textsf{Coarse lower sphere barrier}]
\label{lemma:coarse-lower-sphere-barrier}
For any parameters $l_c>1$ and $r_c\in(0,1)$ let
$\bigl(\ee^{2u(t)}(\ds^2+\df\theta^2)\bigr)_{t\in[0,\infty)}$ be
the corresponding \textsc{cb}-Ricci flow.
Then we have the lower barrier
\begin{equation}
\label{eq:coarse-lower-sphere-barrier}
u(t,s) \ge -\log\cosh(s-s_b) + \frac12\log2(1-t)
\end{equation}
for all $(t,s)\in\left[0,1\right)\times\mathbb R$.
\end{lemma}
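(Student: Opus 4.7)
The plan is to apply the maximally-stretched property of the \textsc{cb}-Ricci flow (Remark~\ref{max_stretch}) with an explicit round-sphere competitor. Set $v(t,s) := -\log\cosh(s-s_b) + \tfrac12\log 2(1-t)$ and $\tilde g(t) := \ee^{2v(t,s)}(\ds^2+\df\theta^2)$ on $\mathbb C$. Geometrically, $\tilde g(t)$ is the round sphere of radius $\sqrt{2(1-t)}$ pulled back to $\mathbb C$ via a stereographic projection sending one pole of the sphere to the point at infinity of $\mathbb C$; a direct computation $\partial_t v = \ee^{-2v}v_{ss} = -\tfrac1{2(1-t)}$ confirms it is a smooth Ricci flow on $\mathbb C$ for $t\in[0,1)$. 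The metric extends smoothly across $z=0$ (the conformal factor in the $z$-chart approaches a positive constant there) but is incomplete as $|z|\to\infty$, which is permitted in the maximally-stretched comparison.

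With this setup, the statement reduces to the initial comparison $v(0,\cdot) = u_5\leq u_{\mathsf{cb}}$ on all of $\mathbb R$: the maximally-stretched property then propagates this to $\tilde g(t)\leq g(t)$ for every $t\in[0,1)$, which is exactly \eqref{eq:coarse-lower-sphere-barrier}. I would verify the initial comparison piece by piece in the decomposition of Definition~\ref{defn:cb-metric}. On $(s_2,\infty)$ equality holds by definition. On $(0,s_2]$ the function $u_4-u_5$ vanishes to first order at $s_2$ (by the $C^1$-matching) and has second derivative $r_c^2\sec^2(r_cs)+\operatorname{sech}^2(s-s_b)>0$, so integration from $s_2$ leftwards gives $u_5\leq u_4$. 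On the cylinder $[-l_c/r_c,0]$, the bound $u_5'=-\tanh(s-s_b)>0$ for $s<s_b$ shows that $u_5$ is increasing, reducing to checking $u_5(0)\leq \log r_c$. On $(s_0,-l_c/r_c)$ we use $u_2\geq\log r_c$ combined with the cylinder estimate. Finally, on $(-\infty,s_0]$ the derivative $(u_5-u_1)'=1-\tanh(s-s_b)>0$ is strictly positive, so monotonicity reduces to $u_5(s_0)\leq u_1(s_0)=\tfrac12\log(1+r_c^2)$, which is trivial since $s_0-s_b$ has huge modulus and $u_5(s_0)$ is consequently hugely negative.

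The one step requiring quantitative control, and which I expect to be the main obstacle, is the cylinder estimate $u_5(0)\leq\log r_c$, equivalently $\cosh(s_b)\geq\sqrt{2}/r_c$. To extract a lower bound on $s_b$ I would exploit the $C^1$-matching system at $s_2$: the equations $r_c\cosh(s_b-s_2)=\sqrt2\cos(r_cs_2)$ and $\tanh(s_b-s_2)=r_c\tan(r_cs_2)$ combine via $\cosh^2-\sinh^2=1$ to give the clean identity $\cos^2(r_cs_2)=\tfrac{3r_c^2}{2(1+r_c^2)}$, so $r_cs_2\geq\arccos\bigl(r_c\sqrt{3/2}\bigr)\nearrow\tfrac\pi2$ as $r_c\downto 0$. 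Hence $s_b\geq s_2 \gtrsim \pi/(2r_c)-O(1)$, and for $r_c<\tfrac1{10}$ one obtains $\cosh(s_b)\geq\tfrac12\exp(\pi/(2r_c)-O(1))$, which is exponentially larger than $\sqrt2/r_c$ with huge margin. With this quantitative input in hand, every remaining inequality in the piecewise verification is a routine consequence of the explicit convexity/concavity and monotonicity properties of the building blocks $u_1,\dots,u_5$.
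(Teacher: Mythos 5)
Your approach matches the paper's: observe that $u_5$ evolves as the shrinking-sphere Ricci flow $(t,s)\mapsto-\log\cosh(s-s_b)+\frac12\log 2(1-t)$ on $\C$, check the initial comparison $u_5\leq u_{\mathsf{cb}}$, and invoke the maximally stretched property. One small point: the ``cylinder estimate'' $u_5(0)\leq\log r_c$ that you flag as the main obstacle already follows for free from your own convexity argument on $(0,s_2]$ by sending $s\to 0^+$ (since $u_4(0)=\log r_c$), so the separate quantitative computation of $s_2$ and $s_b$ is redundant --- which is just as well, since as written it is restricted to $r_c<\nicefrac1{10}$ whereas the lemma allows $r_c\in(0,1)$.
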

\begin{proof}
Comparing the solution to an `inner' sphere
in the bulb part, i.e. $u_5(s)$ in Definition \ref{defn:cb-metric}
for the initial condition, the result follows from the fact that
the \textsc{cb}-Ricci flow
is  maximally stretched (see Theorem \ref{thm:2d-existence}).
\end{proof}

\subsection{Cigar barriers}

Hamilton's cigar will serve as a useful barrier. Therefore,
like in \cite[\S2.2.2]{Top11} we introduce the notation
of the standard cigar, in logarithmic cylindrical coordinates
\begin{equation}
\label{eq:cigar}
\mathcal{C}(s) := -\frac12\log\left(\ee^{2s}+1\right)
\end{equation}
with the corresponding Ricci (soliton) flow
\begin{equation}
\label{eq:cigar-flow}
(t,s) \longmapsto \mathcal{C}\bigl(s+2t\bigr).
\end{equation}
In practice, we will need rescaled (and translated) forms of the cigar
(and its associated Ricci flow),
so for $\lambda>0$, we define
\begin{equation}
\label{eq:scaled-cigar}
\mathcal{C}_\lambda(s) := \mathcal{C}(s) -\frac12\log\lambda
\qquad\text{and}\qquad
\mathcal{C}_\lambda(t,s) = \mathcal{C}_\lambda\bigl(2\lambda t+s\bigr).
\end{equation}
We have the rough estimates
\begin{equation}
\label{eq:cigar-estimate}
\left.
\begin{array}{ll}
-\frac12\log2 & \text{for $s\le0$}\\
-\frac12\log2 -s & \text{for $s>0$}
\end{array}\right\} \le \mathcal{C}(s) \le \left\{
\begin{array}{ll}
0 & \text{for $s\le0$}\\
-s & \text{for $s>0$}
\end{array}\right.
\end{equation}
and a (unit) sphere is dominated by the cigar:
\begin{equation}
\label{eq:cigar-bulb-estimate}
-\log\cosh(s) \le
\textstyle\mathcal{C}_{\frac14}(s)
\qquad\text{for all $s\in\mathbb R$.}
\end{equation}
Thinking of the cigar solution as a capped cylinder which `translates' in time, we
expect its area to behave like the area of a cylinder of length $-s-2t$,
i.e.~$\sim2\pi(-s-2t)$ for $s<-2t$; more precisely, we have the lower
estimate for all $s<-2\lambda t$
\begin{align}
\label{eq:cigar-area-t}
\Vol_{\mathcal{C}_\lambda(t,\cdot)} \Bigl([s,\infty)\times[0,2\pi) \Bigr)
&= 2\pi\lambda^{-1}\,
\frac12 \log\left(\ee^{-2(2\lambda t+s)}+1\right)
\ge -2\pi\lambda^{-1}\bigl(2\lambda t+s\bigr).
\end{align}
Note, that we are abusing notation by writing geometrical quantities
with respect to the conformal factor instead of the associated metric.

\begin{lemma}[\textsf{Coarse upper cigar barrier}]
\label{lemma:coarse-upper-cigar-barrier}
For some parameters $l_c>1$ and $r_c\in(0,1)$ let
$\bigl(\ee^{2u(t)}(\ds^2+\df\theta^2)\bigr)_{t\in[0,\infty)}$ be the
corresponding \textsc{cb}-Ricci flow.
Then we have for all $(t,s)\in\bigl[0,r_c^{-1}\bigr]\times
\left[-\frac{l_c}{r_c},\infty\right)$
\begin{equation}
\label{eq:coarse-upper-cigar-barrier}
u(t,s) \le \mathcal{C}_{\frac18}(t,s-s_b).
\end{equation}
\end{lemma}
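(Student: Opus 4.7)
The strategy is to exploit the fact that $w(t,s):=\mathcal{C}_{1/8}(t,s-s_b)$ is itself a smooth Ricci flow on $\R\times S^1$, so $u$ and $w$ both solve the conformal Ricci flow equation $u_t=\ee^{-2u}(u_{ss}+u_{\theta\theta})$. Setting $\phi:=u-w$, the desired estimate \eqref{eq:coarse-upper-cigar-barrier} amounts to showing $\phi\le 0$ on the parabolic cylinder $[0,r_c^{-1}]\times[-l_c/r_c,\infty)\times S^1$, and I would deduce this from a parabolic comparison principle, reducing the argument to (i) initial data, (ii) boundary data at $s=-l_c/r_c$, (iii) asymptotic behaviour as $s\to\infty$, and (iv) a linearised maximum principle.

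First I would verify the initial comparison $u_{\mathsf{cb}}(s)\le w(0,s)$ for every $s\in[-l_c/r_c,\infty)$ by inspecting each piece of the definition \eqref{eq:def-u_cb}. On the bulb segment $u_5$, the estimate \eqref{eq:cigar-bulb-estimate} combined with the identity $\mathcal{C}_{1/4}(\sigma)=\mathcal{C}(\sigma)+\log 2$ yields at once $u_5(s)\le\mathcal{C}(s-s_b)+\tfrac32\log 2=w(0,s)$. On the cylinder segment $u_3\equiv\log r_c<0$, since $s-s_b\le 0$ throughout this range, the lower cigar estimate \eqref{eq:cigar-estimate} produces $w(0,s)\ge-\tfrac12\log 2+\tfrac32\log 2=\log 2>0>\log r_c$. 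On the short cusp-interpolation piece $u_4$, using $s_2<\widetilde s_2=r_c^{-1}\arctan r_c^{-1}<s_b$ from Lemma \ref{lemma:cb-prop}, a short direct calculation closes the comparison.

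Next I would check the parabolic boundary: on the left face $\{s=-l_c/r_c\}$ one has $w(t,-l_c/r_c)\ge\log 2$ uniformly in $t\in[0,r_c^{-1}]$ by \eqref{eq:cigar-estimate}, so it suffices to show $u(t,-l_c/r_c)\le\log 2$ for those $t$. Toward $s\to\infty$, the bulb closes smoothly at $z=0$ and both $u$ and $w$ become $-\infty$ in a controlled fashion, with Lemma \ref{lemma:coarse-lower-sphere-barrier} providing the matching lower control. Rewriting the evolution as
\[
\phi_t=\ee^{-2u}\Delta\phi+\bigl(\ee^{-2u}-\ee^{-2w}\bigr)\Delta w=\ee^{-2u}\Delta\phi+2A\phi\bigl(-\Delta w\bigr)
\]
for some $A>0$ bounded on the region, and using that the cigar has bounded curvature so $-\Delta w$ is uniformly bounded, a standard linearised parabolic maximum principle on compact exhaustions (applied to $\ee^{-Ct}\phi$ for a suitable constant $C$) then propagates the boundary data inward to give $\phi\le 0$.

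The hard part is the left-boundary estimate $u(t,-l_c/r_c)\le\log 2$ on $[0,r_c^{-1}]$. The cigar is not a global upper barrier: on the flat part $\mathcal{U}_e$ we have $u_{\mathsf{cb}}(s)=-s+s_e\to\infty$ as $s\to-\infty$, while $w(0,s)$ stays bounded, so one cannot simply extend the comparison to the whole plane. A separate analysis on the hyperbolic cusp interpolation region $\mathcal{U}_i$ is therefore required, most likely by constructing an auxiliary upper barrier tailored to the cusp (in the spirit of the hyperbolic-cusp barrier technology of \cite{Top11,GT13}) and propagating it to the endpoint $s=-l_c/r_c$. Once this boundary comparison is in hand, the rest of the argument is the routine linearised comparison outlined above.
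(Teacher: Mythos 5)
Your overall strategy matches the paper's: compare $u$ to the cigar $\mathcal{C}_{1/8}(t,s-s_b)$ on the half-cylinder $\{s\ge -l_c/r_c\}$ by checking initial data, checking the left boundary, and applying a comparison principle. Your verification of the initial inequality $u(0,s)\le \mathcal{C}_{1/8}(s-s_b)$ is correct and is essentially what the paper does, and you correctly identify that the crux is the boundary estimate $u(t,-l_c/r_c)\le \log 2$ for $t\in[0,r_c^{-1}]$.

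The genuine gap is that you declare this boundary estimate to be ``the hard part'' and propose constructing an auxiliary upper barrier tailored to the cusp region $\mathcal{U}_i$, which you do not carry out. This is both unnecessary and the wrong direction: no new barrier is needed, because the estimate is an immediate consequence of Corollary \ref{cor:chen-lower-curv-bd-cf}. Since $\GK_{g_{\mathsf{cb}}}\ge -1$ (Lemma \ref{lemma:cb-prop}(i)), Chen's bound on the growth of the conformal factor gives $u(t,s)\le u(0,s)+\tfrac12\log(2t+1)$ everywhere. At $s=-l_c/r_c$ the initial value is $u(0,-l_c/r_c)=\log r_c$, so
\[
u(t,-l_c/r_c)\le \log r_c+\tfrac12\log(2t+1)=\tfrac12\log\bigl(2r_c^2 t+r_c^2\bigr)\le\tfrac12\log 4=\log 2
\]
for all $t\le r_c^{-1}$ and $r_c<1$. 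The crucial feature you overlooked is that the cylinder starts out extremely thin, $u(0,-l_c/r_c)=\log r_c$ is very negative, so even after time $r_c^{-1}$ the controlled growth rate keeps the conformal factor below $\log 2$. Your concern about $\mathcal{U}_e$ (where $u_{\mathsf{cb}}\to\infty$) is real but is resolved precisely by restricting the comparison to the half-cylinder and using this one-sided Chen estimate at the fixed boundary $s=-l_c/r_c$, rather than by any cusp-barrier construction. Once this line is supplied, the remainder of your argument (the linearised maximum principle with compact exhaustions) is more elaborate than the paper's direct invocation of the comparison principle but is not incorrect.
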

\begin{proof}
Choosing cylindrical coordinates as usual, we can compare the
initial metric to a cigar dominating the bulb part
\begin{align*}
u(0,s) &\le \left\{
\begin{array}{ll}
\frac12\log 2 & \text{if } s\in\left[-\frac{l_c}{r_c},s_b\right] \\
\frac12\log 2 -\log\cosh(s-s_b)\hspace{3.5em} & \text{if } s\in(s_b,\infty)
\end{array} \right.\\
&\le \frac12\log 2 +\mathcal{C}_{\frac14}(s-s_b) =\mathcal{C}_{\frac18}(s-s_b)
\qquad\text{for all $\textstyle
s\in\left[-\frac{l_c}{r_c},\infty\right)$}
\end{align*}
using \eqref{eq:cigar-estimate} and \eqref{eq:cigar-bulb-estimate} in
the second line. 
On the other hand, using Corollary \ref{cor:chen-lower-curv-bd-cf}
we have for all $t\in\bigl[0,r_c^{-1}\bigr]$ at the boundary
$s=-\frac{l_c}{r_c}$:
\begin{align*}
\textstyle u\left(t,-\frac{l_c}{r_c}\right) &\le
{\textstyle u\left(0,-\frac{l_c}{r_c}\right)} +\frac12\log(2t+1)
= \frac12\log\left(2r_c^2t+r_c^2\right) \le \frac12\log4\\
&\le \frac12\log8 + \textstyle \mathcal{C}\left(\frac{t}{4}-\frac{l_c}{r_c}-s_b\right)
= \mathcal{C}_{\frac18}\left(t,-\frac{l_c}{r_c}-s_b\right)
\quad\text{using \eqref{eq:cigar-estimate}.}
\end{align*}
Hence, we may apply the comparison principle and obtain the result.
\end{proof}

That upper cigar barrier is coarse in the sense that the `circumference' of the cigar is of order one so that it can dominate the entire bulb. We now make a refinement than says the bulb part is dominated by a cigar of tiny circumference of order $r_c$ once the bulb has been given time to collapse. We also get lower bounds which show that the cylinder itself has not collapsed at that time.

\begin{lemma}[\textsf{Refined cigar barriers}]
\label{lemma:refined-cigar-barriers}
For parameters $r_c\in\left(0,\frac1{10}\right)$ and
$l_c\ge \frac18 r_c^{-1}$ there exists a time
$t_1\in\bigl(\nicefrac34,\nicefrac52\bigr)$, such that
the corresponding \textsc{cb}-Ricci flow
$\bigl(\ee^{2u(t)}(\ds^2+\df\theta^2)\bigr)_{t\in[0,\infty)}$
has the upper and lower barriers
\begin{align}
\label{eq:upper-cigar-barrier}
u(t,s) &\le \mathcal{C}_{(4r_c)^{-2}}\bigl(t-t_1,s-2r_c^{-1}\bigr)
&&\text{for all
$\textstyle
(t,s)\in\bigl[t_1,t_1+1\bigr]\times
\left[-\frac{l_c}{r_c},\infty\right)$,}\\
\label{eq:lower-cigar-barrier}
u(t,s) &\ge \textstyle\mathcal{C}_{r_c^{-2}}\left(t-t_1,s\right)
&&\text{for all
$\textstyle
(t,s)\in[t_1,\infty)\times\mathbb R$.}
\end{align}
\end{lemma}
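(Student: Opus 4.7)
Define $t_1 := \tfrac{1}{4\pi}\Vol_{g(0)}\mathcal{U}_b$, matching the time $T$ in Proposition~\ref{prop:finite-csf} and Lemma~\ref{lemma:width-estimate}. Lemma~\ref{lemma:cb-prop}(iv) gives $t_1 < \tfrac{5}{2}$. For the lower bound, evaluate the two defining integrals $2\pi\int_0^{s_2} e^{2u_4}\ds$ and $2\pi\int_{s_2}^\infty e^{2u_5}\ds$ explicitly as $2\pi r_c\tan(r_c s_2)$ and $4\pi + 4\pi\tanh(s_b-s_2)$ respectively, then use the $\Cts^1$-matching identity $r_c\tan(r_c s_2)=\tanh(s_b-s_2)$ (from $u_4'(s_2)=u_5'(s_2)$) to obtain
\[ \Vol_{g(0)}\mathcal{U}_b = 2\pi\bigl[3\tanh(s_b-s_2)+2\bigr] > 4\pi \]
since $s_b > s_2$, hence $t_1 > 1 > \tfrac{3}{4}$.

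For the upper cigar barrier \eqref{eq:upper-cigar-barrier}, the plan is to verify the pointwise inequality at $t=t_1$ and on the spatial boundary $s=-l_c/r_c$, then propagate by the parabolic comparison principle on $[t_1,t_1+1]\times(-l_c/r_c,\infty)$. At $t=t_1$ the width estimate \eqref{eq:width-estimate-u} gives $u(t_1,s)\leq\log r_c + \tfrac{1}{2}\log(2t_1+1) < \log r_c + \tfrac{3}{2}\log 2$, which is $\mathcal{C}_{(4r_c)^{-2}}(0)$, the minimum of the target cigar on $s\leq 2r_c^{-1}$. For $s>2r_c^{-1}$, combine the width estimate with the coarse upper cigar barrier \eqref{eq:coarse-upper-cigar-barrier} (valid since $t_1<r_c^{-1}$ for $r_c<\tfrac{1}{10}$) and the bound $s_b\leq\tfrac{7}{4}r_c^{-1}$ (Lemma~\ref{lemma:cb-prop}(iii)) to match the cigar's asymptotic slope $-1$ with room to spare. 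The left-boundary inequality at $s=-l_c/r_c$ is handled by Corollary~\ref{cor:chen-lower-curv-bd-cf}, exactly as in the proof of Lemma~\ref{lemma:coarse-upper-cigar-barrier}.

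For the lower cigar barrier \eqref{eq:lower-cigar-barrier}, apply the time-shifted maximally-stretched property (Remark~\ref{max_stretch}) at $a=t_1$ with competitor $\tilde g(t):=\mathcal{C}_{r_c^{-2}}(t-t_1,\cdot)$. This reduces the problem to the single initial pointwise comparison $u(t_1,s)\geq\mathcal{C}(s)+\log r_c$ for all $s\in\mathbb{R}$. The main obstacle is establishing this intermediate-time bound: a direct application of maximally-stretched from $a=0$ with the same cigar competitor fails, because $\mathcal{C}_{r_c^{-2}}(-t_1,s)\not\leq u_{\mathsf{cb}}(s)$ on the outer-bulb region $s\in\bigl(s_b+\log(2\sqrt 2/r_c),\ 2r_c^{-2}t_1\bigr)$, where $u_{\mathsf{cb}}$ has dropped below $\log r_c$ but the shifted cigar has not. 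The plan to bridge this gap is to combine several competitors: (i)~a static rescaled flat-plane $\tilde g(0)=r_c^2|\dz|^2$, checked to satisfy $\tilde g(0)\leq g_{\mathsf{cb}}$ globally via $\phi(0,\cdot)\geq 0$ on each of the five regions of Definition~\ref{defn:cb-metric}, yielding $\phi(t,z)\geq\log r_c$ by maximally-stretched; (ii)~the coarse lower sphere barrier Lemma~\ref{lemma:coarse-lower-sphere-barrier} on the initial interval $t<1$ to control the evolution of the bulb; and (iii)~a refined competitor concentrated near the bulb tip (for instance a shrinking round sphere with suitably tuned initial radius, or a cigar of tiny scale centred at $z=0$) designed so that its Ricci-flow evolution at $t_1$ delivers the required tip conformal-factor bound $\phi(t_1,0)\geq\log r_c-s_e$ without violating $\tilde g(0)\leq g_{\mathsf{cb}}$ on the outer bulb. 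Designing this tip-region competitor — balancing the global dominance condition against the needed tip strength, which is sharp in $r_c$ — is the principal technical challenge. Once the pointwise comparison at $t=t_1$ is in hand, the time-shifted maximally-stretched argument propagates \eqref{eq:lower-cigar-barrier} for all $t\geq t_1$.
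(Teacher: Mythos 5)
Your upper-barrier outline is in the spirit of the paper's argument, but your choice $t_1:=\tfrac1{4\pi}\Vol_{g(0)}\mathcal U_b$ is not what the paper uses, and this mismatch is exactly what causes the gap you then encounter in the lower barrier. The paper defines $t_1$ \emph{implicitly} as the first time at which $\max_{s\ge -l_c/r_c}u(t_1,s)=\log r_c+\tfrac12\log8$, achieved at some $s_1>0$ (the width estimate at time $T$ guarantees this level has been crossed, the static-cylinder bound from Corollary~\ref{cor:chen-lower-curv-bd-cf} forces $s_1>0$, and the coarse lower sphere barrier gives $t_1\ge1-4r_c^2>\tfrac34$). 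With this specific choice one can prove the Claim that $u(t_1,s)\ge\log r_c$ for $s\le s_1$ and $u(t_1,s)\ge s_1-s+\log r_c$ for $s>s_1$: one compares on $[0,t_1]\times(-\infty,s_1]$ with the static cylinder and on $[0,t_1]\times[s_1,\infty)$ with the flat plane, and the boundary value $u(t,s_1)>\log r_c$ for all $t\in[0,t_1]$ is secured by the simple contradiction argument using Corollary~\ref{cor:chen-lower-curv-bd-cf} and the fact that the maximum at time $t_1$ equals $\log r_c+\tfrac12\log8$. By \eqref{eq:cigar-estimate} the Claim's lower bound is precisely an upper envelope for $\mathcal C_{r_c^{-2}}(\cdot-s_1)$, so the lower cigar barrier at time $t_1$ is immediate, and the maximally-stretched property (plus $s_1>0$, to slide the cigar left) propagates it forward in time.

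By taking $t_1=T$ you lose the lever that makes the lower barrier tractable: you have only a one-sided (upper) bound on $u(T,\cdot)$ from the width estimate, and no information locating a point $s_1$ at which the solution has remained above $\log r_c$ for all $t\le t_1$. You correctly identify that a single cigar competitor launched from $t=0$ fails on the outer bulb, but your proposed repair — combining a static plane $r_c^2|\dz|^2$, the coarse lower sphere barrier, and an unspecified ``tip-region competitor'' — does not assemble into a cigar lower bound: the plane and sphere each give a lower envelope of the wrong shape (a line of slope $-1$ in $s$, respectively a shrinking sphere that vanishes by $t=1$), and you acknowledge the tip competitor as an unresolved ``principal technical challenge.'' This is a genuine gap, not a cosmetic one: without the Claim (or an equivalent device) the proof of \eqref{eq:lower-cigar-barrier} is incomplete. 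Separately, your computation that $\Vol_{g(0)}\mathcal U_b>4\pi$ via the $\Cts^1$-matching is correct and neat, though the paper derives $t_1>\tfrac34$ from Lemma~\ref{lemma:coarse-lower-sphere-barrier} instead.
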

\begin{proof}
Fix $r_c$ and $l_c$ according to the lemma's
statement.
First we find the time that the bulb has first collapsed to a specified degree.
\begin{description}
\item[Claim.] There exist a time
$t_1\in\left(\frac34,\frac52\right)$
and a
point $s_1>0$ such that
\begin{align}
\label{eq:refined-cigars-claim-1}
u(t_1,s_1)&=\max_{s\ge-\frac{l_c}{r_c}}
u(t_1,s) = \log r_c + \frac12\log8 \\
\label{eq:refined-cigars-claim-2}
\text{and}\qquad
u(t_1,s) &\ge \begin{cases}
\log r_c & \text{for all $s\in(-\infty,s_1]$}\\
s_1-s +\log r_c & \text{for all $s\in(s_1,\infty)$.}
\end{cases}
\end{align}
\item[\textit{\mdseries Proof of Claim.}]
Note first that given any $t_1\leq \frac52$, we cannot have $s_1\in\bigl[-\frac{l_c}{r_c},0\bigr]$ satisfying \eqref{eq:refined-cigars-claim-1}
because Corollary \ref{cor:chen-lower-curv-bd-cf} would constrain
$u(t_1,s_1)\leq u(0,s_1)+\frac12\log(2t_1+1)\leq \log r_c+\frac12\log 6$
(i.e. the cylinder cannot fatten up too quickly).
By Lemma
\ref{lemma:cb-prop}(iv), we know
$\frac1{4\pi}\Vol_{g_{\mathsf{cb}}}\mathcal{U}_b<\frac52$, thus the
existence of
$t_1\in\left(0,\frac1{4\pi}\Vol_{g_{\textsf{cb}}}\mathcal{U}_b\right)
\subset\left(0,\nicefrac52\right)$
and $s_1>0$ satisfying \eqref{eq:refined-cigars-claim-1} is a
consequence of Lemma \ref{lemma:width-estimate} and in particular estimate
\eqref{eq:width-estimate-u}. From Lemma
\ref{lemma:coarse-lower-sphere-barrier} we see that
$t_1\ge1-4r_c^2\ge\frac34$. In order to
show \eqref{eq:refined-cigars-claim-2} we are going to compare
$u\bigr|_{[0,t_1]}$ with the flat, static cylinder solution
$(t,s)\mapsto\log r_c$ on $[0,t_1]\times(-\infty,s_1]$ and with
the flat plane $(t,s)\mapsto s_1-s+\log r_c$ on
$[0,t_1]\times[s_1,\infty)$:

First observe, that $u(t,s_1)>\log r_c$ for all $t\in[0,t_1]$
because if at some time $t_0\in[0,t_1)$ we had $u(t_0,s_1)\le\log
r_c$, then using Corollary \ref{cor:chen-lower-curv-bd-cf} along with
\eqref{eq:refined-cigars-claim-1} we would obtain the contradiction
\begin{align*}
\log r_c+\frac12\log8 &= u(t_1,s_1)\\
&\le u(t_0,s_1) +
\frac12\log\frac{2t_1+1}{2t_0+1}\\
&\le \log r_c +
\frac12\log6. 
\end{align*}
By construction we have initially
\[ u(0,s)=u_{\mathsf{cb}}(s)\ge \begin{cases}
\log r_c & \text{for all $s\in(-\infty,s_1]$}\\
s_1-s +\log r_c & \text{for all $s\in(s_1,\infty)$.}
\end{cases} \]
Therefore, we can apply a comparison principle on both sides of
$s_1$ and conclude   \eqref{eq:refined-cigars-claim-2}. \hfill //
\end{description}
In order to show \eqref{eq:upper-cigar-barrier}, we start
comparing both solutions `initially' at time
$t_1$. Combining the coarse upper barrier from Lemma
\ref{lemma:coarse-upper-cigar-barrier}
(valid until time $r_c^{-1}\ge 10>\nicefrac52>t_1$)
with
\eqref{eq:refined-cigars-claim-1} and using estimate
\eqref{eq:cigar-estimate} we have for all $s\ge-\frac{l_c}{r_c}$
\begin{align*}
u(t_1,s)
&\le \min\left\{\log r_c+\frac12\log8,\,
\mathcal{C}_{\frac18}(t_1,s-s_b)\right\} \\
&\le \left\{
\begin{array}{cl}
\frac12\log\bigl(8r_c^2\bigr)
& \text{for $s\in\left[-\frac{l_c}{r_c},
\widetilde{s_1}\right)$} \\
-s + s_b - \frac{t_1}4 +\frac12\log8
\hspace{5em}& \text{for $s\in[\widetilde{s_1},\infty)$}
\end{array} \right.\\
&= -\frac12\log2 + \left\{
\begin{array}{ll}
\frac12\log\bigl(16r_c^2\bigr)
& \text{for $s\in\left[-\frac{l_c}{r_c},
\widetilde{s_1}\right)$} \\
\frac12\log(16r_c^2) -s + \widetilde s_1
\hspace{1em}& \text{for $s\in[\widetilde{s_1},\infty)$}
\end{array} \right.\\
&\le \mathcal{C}_{\left(4r_c\right)^{-2}}\bigl(s -
2r_c^{-1}\bigr)
\hspace{7em}\textstyle\text{ for all
$s\in\left[-\frac{l_c}{r_c},\infty\right)$,}
\end{align*}
where $\widetilde{s_1}=s_b -\frac{t_1}4 -\log r_c\le
2r_c^{-1}$ (using Lemma \ref{lemma:cb-prop}(iii)) is roughly
the intersection of $\frac12\log\bigl(8r_c^2\bigr)$ with
$\mathcal{C}_{\frac18}(t_1,s-s_b)$. At the boundary
$s=-\frac{l_c}{r_c}$ we have for all $t\in[0,t_1+1]$ (using
Corollary \ref{cor:chen-lower-curv-bd-cf} and \eqref{eq:cigar-estimate})
\begin{align*}
u\left(t,-\frac{l_c}{r_c}\right) &\le \log r_c +
\frac12\log(2t+1) \le \frac12\log\left(8r_c^2\right) \\
&\le \frac12\log(16r_c^2) +\textstyle
\mathcal{C}\left(2(16r_c^2)^{-1} -\frac{l_c}{r_c} -2r_c^{-1}\right)\\
&= \textstyle\mathcal{C}_{(4r_c)^{-2}}\left(
1,-\frac{l_c}{r_c}-2r_c^{-1}\right)
\le \mathcal{C}_{(4r_c)^{-2}}
\left(t-t_1,-\frac{l_c}{r_c}-2r_c^{-1}\right)
\end{align*}
with the above choice of $l_c\ge\frac18 r_c^{-1}$.
Thus the comparison principle implies \eqref{eq:upper-cigar-barrier} as desired.

Finally, \eqref{eq:lower-cigar-barrier} follows from the
fact that $\bigl(u(t)\bigr)_{t\in[0,\infty)}$ is maximally stretched and
we have at the time $t_1$ for all $s\in\mathbb R$ from \eqref{eq:refined-cigars-claim-2}
\begin{align*}
u(t_1,s) &\ge \begin{cases}
\log r_c & \text{for all $s\in(-\infty,s_1]$}\\
\log r_c - (s-s_1) & \text{for all $s\in(s_1,\infty)$}
\end{cases} \\
&\ge
\mathcal{C}_{r_c^{-2}}\bigl( s-s_1\bigr)
= \mathcal{C}_{r_c^{-2}}\bigl(0, s-s_1\bigr)
\qquad\text{using \eqref{eq:cigar-estimate};}
\end{align*}
hence $u(t,s)\ge
\mathcal{C}_{r_c^{-2}}\bigl(t-t_1,
s-s_1\bigr)\ge\mathcal{C}_{r_c^{-2}}\bigl(t-t_1, s\bigr) $ for
all $t\ge t_1$.
\end{proof}

\subsection{Bounding the metric at later times}
\label{barrier_final_sect}

The cigar barriers we have just constructed give good control on the
\textsc{cb}-Ricci flow after the bulb has deflated and while the cylinder part is shrinking. To regain control on the curvature afterwards, we will need the following:

\begin{lemma}[\textsf{Under cusp}]
\label{lemma:cusp-barrier}
For some parameters $r_c\in\left(0,\frac1{10}\right)$ and
$l_c= \frac18r_c^{-1}$ if we consider the \textsc{cb}-Ricci flow
$\bigl(g(t)\bigr)_{t\in[0,\infty)}$
on $\mathbb C$,
then
\begin{equation}
\label{eq:uniform-cusp-barrier}
g(\nicefrac72) \le 256\, g_{\mathrm{hyp}} \quad\text{on $\Disc\setminus\{0\}$,}
\end{equation}
where $g_{\mathrm{hyp}}$ is the complete hyperbolic metric on
$\Disc\setminus\{0\}$.
\end{lemma}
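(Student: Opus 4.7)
The plan is to dominate $g(\nicefrac72)$ by a multiple of $g_{\mathrm{hyp}}$ in two stages: first obtain $g(t_0)\le C_0\,g_{\mathrm{hyp}}$ on $\Disc\setminus\{0\}$ at the intermediate time $t_0:=t_1+1$ (which lies in $(\nicefrac74,\nicefrac72)$ by Lemma~\ref{lemma:refined-cigar-barriers}), and then propagate forward by comparison with the Ricci flow $\hat g(t):=\bigl(C_0+2(t-t_0)\bigr)g_{\mathrm{hyp}}$ on $\Disc\setminus\{0\}$---itself a solution because $g_{\mathrm{hyp}}$ has constant curvature $-1$. Since the subsequent growth factor $2(\nicefrac72-t_0)$ is at most $\nicefrac72$, the target $g(\nicefrac72)\le 256\,g_{\mathrm{hyp}}$ will follow from any $C_0\le 256-\nicefrac72$; my estimates will give $C_0=(8+2\pi)^2$, which sits well below this.

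For the bound at $t_0$, I switch to logarithmic cylindrical coordinates $z=\ee^{-s+s_e+\mathrm{i}\theta}$, in which $\Disc\setminus\{0\}=\{s>s_e\}$ and $u_{\mathrm{hyp}}(s)=-\log(s-s_e)$, and split into the interpolation range $(s_e,-l_c/r_c)$ and the cigar/cap range $[-l_c/r_c,\infty)$. On the interpolation range, $u(0,s)=u_2(s)$ is already the conformal factor of a piece of a hyperbolic metric, and a direct computation shows that $u_2(s)-u_{\mathrm{hyp}}(s)=\log\bigl(r_c(s-s_e)/\cos(r_c s+l_c)\bigr)$ is monotone increasing in $s$ with supremum $<\log(\pi/2)$---the numerator of its derivative is $\cos\xi+(\xi-\xi_0)\sin\xi$ in $\xi:=r_c s+l_c$, whose derivative $(\xi-\xi_0)\cos\xi$ is nonnegative on the relevant interval. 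Combined with Corollary~\ref{cor:chen-lower-curv-bd-cf}, $u(t_0)\le u(0)+\tfrac12\log(2t_0+1)\le u(0)+\tfrac12\log 8$, this yields $u(t_0)-u_{\mathrm{hyp}}<\log(\pi\sqrt 2)$ there. On the cigar/cap range, the refined upper cigar barrier of Lemma~\ref{lemma:refined-cigar-barriers} at $t=t_0$ reduces---thanks to the cancellation $2(4r_c)^{-2}=l_c/r_c$ under our choice $l_c=r_c^{-1}/8$---to $u(t_0,s)\le\mathcal{C}\bigl(\sigma-2r_c^{-1}\bigr)+\log(4r_c)$ with $\sigma:=s+l_c/r_c\ge 0$. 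Combining with \eqref{eq:cigar-estimate}, an elementary one-variable maximization bounds $u(t_0)-u_{\mathrm{hyp}}$ by $\log(8+4r_c\alpha)<\log(8+2\pi)$, where $\alpha:=-l_c/r_c-s_e<\pi/(2r_c)$. Overall, $g(t_0)\le(8+2\pi)^2\,g_{\mathrm{hyp}}$ on $\Disc\setminus\{0\}$.

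For the propagation, I apply the standard parabolic comparison principle for the conformal-factor equation $u_t=\ee^{-2u}\Delta u$ on the strip $\{s>s_e\}\times[t_0,\nicefrac72]$, against the barrier $u_{\mathrm{hyp}}(s)+\tfrac12\log\bigl((8+2\pi)^2+2(t-t_0)\bigr)$. The initial data at $t=t_0$ has been controlled; at the spatial `boundary' $s=s_e$ the hyperbolic factor blows up while $u$ stays bounded; and as $s\to\infty$ the coarse upper cigar barrier of Lemma~\ref{lemma:coarse-upper-cigar-barrier} forces $u(t,s)$ to decay exponentially in $s$ uniformly in $t\in[t_0,\nicefrac72]$, whereas $u_{\mathrm{hyp}}$ decays only logarithmically---so the inequality holds on the full parabolic boundary. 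The main obstacle is purely one of bookkeeping: the bound at $t_0$ must lie strictly below $256-2(\nicefrac72-t_0)$ uniformly in $r_c$, which is exactly why Lemma~\ref{lemma:refined-cigar-barriers} was engineered with a cigar waist of order $r_c$ matching the cylinder---so that the resulting constant $(8+2\pi)^2$ is independent of $r_c$ and leaves a comfortable margin.
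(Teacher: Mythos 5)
Your proof is correct and follows the same two-stage strategy as the paper: bound the conformal factor against the hyperbolic cusp at time $t_1+1$ by treating the interpolation range $(s_e,-l_c/r_c)$ and the cigar range $[-l_c/r_c,\infty)$ separately, then propagate forward under Ricci flow. The technical choices differ slightly from the paper's (the paper extends the first range to $-(l_c-2)/r_c$ so that it can compare the cigar to the cusp by a derivative argument at the crossover, landing on the constant $225$, whereas your pointwise optimisation gives $(8+2\pi)^2$; and the paper phrases the forward propagation via the maximally-stretched property of $(C_0+2t)g_{\mathrm{hyp}}$ rather than an explicit parabolic comparison with an exhaustion argument), but these are cosmetic and both sets of constants comfortably clear $256$.
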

\begin{proof}
As usual we write $g(t)=\ee^{2u(t,s)}\bigl(\ds^2+\df\theta^2\bigr)$,
and fix $t_1\in\bigl(\nicefrac34,\nicefrac52\bigr)$ from Lemma
\ref{lemma:refined-cigar-barriers}. Recall from Lemma
\ref{lemma:cb-prop}(ii), we have
$s_e=s_0+\frac12\log\left(1+r_c^2\right)\in\left(-\frac{l_c+\nicefrac\pi2}{r_c},-\frac{l_c}{r_c}\right)$ and
$\Disc\setminus\{0\}\simeq(s_e,\infty)\times[0,2\pi)$, in particular
$g_{\mathrm{hyp}}=\frac{\ds^2+\df\theta^2}{(s-s_e)^2}$.

At the time $t=t_1+1$ we have on the one hand side for
$s\in\left(s_e,-\frac{l_c-2}{r_c}\right]$ by Corollary
\ref{cor:chen-lower-curv-bd-cf} and the construction of the metric
$g_{\mathsf{cb}}$
\begin{align*}
u(t_1+1,s) &\le \frac12\log\bigl(2(t_1+1)+1\bigr) + u_{\mathsf{cb}}(s) \\
&\le \frac12\log8 +
\begin{cases}
-\log\Bigl(r_c^{-1}\cos(r_cs+l_c)\Bigr) & \text{for
$s\in\left(s_e,-\frac{l_c}{r_c}\right)$}\\
\log r_c & \text{for $s\in\left[-\frac{l_c}{r_c},-\frac{l_c-2}{r_c}\right]$}
\end{cases}\\
&\le \frac12\log8 -\log\left(s+\frac{l_c+\frac\pi2}{r_c}\right)
+
\begin{cases}
\log\frac\pi2 &\text{for
$s\in\left(s_e,-\frac{l_c}{r_c}\right)$}\\
\log\left(2+\frac\pi2\right) & \text{for
$s\in\left[-\frac{l_c}{r_c},-\frac{l_c-2}{r_c}\right]$}
\end{cases}\\
&\le -\log\left(s-s_e\right)  + \log15 \qquad\textstyle\text{for
all $s\in\left(s_e,-\frac{l_c-2}{r_c}\right]$.}
\end{align*}
On the other hand, by virtue of Lemma
\ref{lemma:refined-cigar-barriers} we know that $u(t_1+1,s)\le
\mathcal{C}_{(4r_c)^{-2}}\bigl(1,s-2r_c^{-1}\bigr)$
for all $s\ge-\frac{l_c-2}{r_c}$, so it suffices to compare the cusp
of $g_{\mathrm{hyp}}$ with this cigar in this part.
Note that for all $s\ge-\frac{l_c-2}{r_c}=-\frac18r_c^{-2}+2r_c^{-1}$ we have
\begin{align*}
\frac{\df}{\df s}\Bigl(
\mathcal{C}_{(4r_c)^{-2}}\bigl(1,s-2r_c^{-1}\bigr)\Bigr)
&= \frac{\df}{\df s} \Bigl(
\mathcal{C}\bigl(\textstyle\frac18r_c^{-2}+s-2r_c^{-1}\bigr)\Bigr) \\
&\le 
\mathcal{C}'(0)
=  -\frac12
\le -\frac1{s+\frac{l_c}{r_c}} \le
-\frac1{s-s_e}
= \frac{\df}{\df s}\Bigl(
-\log\left(s-s_e\right) \Bigr).
\end{align*}
Combined with the following inequality at $s=-\frac{l_c-2}{r_c}$
\begin{align*}
\mathcal{C}_{(4r_c)^{-2}}\left(1,-\frac{l_c}{r_c}\right)
&\le \log\left(4r_c\right) &&\text{using
\eqref{eq:cigar-estimate}}\\
&= \log4-\log\left(-\frac{l_c-2}{r_c}+\frac{l_c+\frac\pi2}{r_c}\right)
+ \log\left(2+\frac\pi2\right)\\
&\le
-\log\left(-\frac{l_c-2}{r_c}-s_e\right)
+ \log15,
\end{align*}
we may conclude that
$$\mathcal{C}_{(4r_c)^{-2}}\bigl(1,s-2r_c^{-1}\bigr)
\le -\log(s-s_e)+\log 15\qquad \text{for }s\ge -\frac{l_c-2}{r_c}$$
and thus
$u(t_1+1,s) \le
-\log\left(s-s_0\right)+\log15$ for all $s\ge s_e$, or equivalently
$g(t_1+1) \le 225\,g_{\mathrm{hyp}}$ on
$\Disc\setminus\{0\}$. Consequently, we have
\[ g(t) \le \bigl(2(t-t_1)+225\bigr)\,g_{\mathrm{hyp}} \qquad\text{on
$\Disc\setminus\{0\}$ for all $t\in[t_1,\infty)$} \]
because the right-hand side is a maximally stretched Ricci flow on
$\Disc\setminus\{0\}$; in particular, we have
\eqref{eq:uniform-cusp-barrier}.
\end{proof}

Following \cite[Lemma 2.9]{Top11} we use the following result from
\cite{Top12} in order to bound uniformly the conformal factor.

\begin{lemma}[Special case of {\cite[Lemma 3.3]{Top12}}]
\label{lemma:cf-bd-hyp-cusp}
If $\bigl(\ee^{2v(t)}|\dz|^2\bigr)_{t\in[0,1]}$ is any smooth Ricci
flow on $\Disc$  with $\ee^{2v(0)}|\dz|^2\le g_{\mathrm{hyp}}$ on
$\Disc\setminus\{0\}$, then there exists a universal constant
$\beta\in(0,\infty)$ such that
\begin{equation}
\label{eq:rev-cusp-key-lemma}
\sup_{\Disc_{\frac12}} v(t) \le \frac\beta t
\qquad\text{for all $t\in(0,1]$.}
\end{equation}
\end{lemma}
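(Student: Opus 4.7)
The statement is quoted as a special case of \cite[Lemma 3.3]{Top12}, so in practice the proof reduces to invoking that result directly. Below I sketch the strategy behind it. In isothermal coordinates $g(t)=e^{2v(t)}|\dz|^2$, the 2D Ricci flow is the logarithmic fast-diffusion equation $\partial_t v = e^{-2v}\Delta v$, and the claim is an \emph{instantaneous smoothing} estimate: although the initial datum $v(0)$ is only controlled above by the cusp conformal factor $v_{\mathrm{hyp}}$ (which equals $+\infty$ at $z=0$), we must produce a pointwise bound of order $\beta/t$ on the interior disc $\Disc_{1/2}$ for every $t>0$.

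The approach I would take is a barrier/comparison argument. The goal is to construct a supersolution $V(t,z)$ of the Ricci flow PDE on $[0,1]\times\Disc$ satisfying $V(0,\cdot)\ge v_{\mathrm{hyp}}$ on $\Disc\setminus\{0\}$ (in an appropriate limiting sense at the puncture) and $V(t,z)\le \beta/t$ on $\Disc_{1/2}$ for some universal $\beta$. The maximally stretched property of Theorem \ref{thm:2d-existence}, applied to the Ricci flow generated by $V$, then yields $v(t)\le V(t)$ and hence the claim. A practical candidate for $V$ would be built by gluing: on an annular collar $\{1/4\le|z|<1\}$, the rescaled hyperbolic flow $v_{\mathrm{hyp}}+\tfrac12\log(1+2t)$ is itself a Ricci flow on the punctured disc, and is uniformly bounded on this annulus (since $v_{\mathrm{hyp}}$ is bounded there); near $z=0$, one takes a rotationally symmetric profile which is singular at $(t,z)=(0,0)$ but whose interior values decay like $\beta/t$ for $t>0$.

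The main obstacle is the construction of the near-origin barrier with the correct quantitative $\beta/t$ decay, together with matching the outer hyperbolic piece across the overlap region. This is the delicate technical content of \cite[Lemma 3.3]{Top12}, which exploits the integrability of the cusp volume density near the puncture to achieve the sharp $1/t$ smoothing rate for the fast-diffusion equation.
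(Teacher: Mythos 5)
The paper offers no proof of this lemma --- it is cited from \cite[Lemma 3.3]{Top12} and used as a black box --- so the ``paper's own proof'' is the citation, and your proposal correctly treats it that way. The sketch you attach of the cited argument, however, contains two concrete errors. You claim that $v_{\mathrm{hyp}}$ is bounded on the collar $\{1/4\le|z|<1\}$: it is not. The complete hyperbolic metric on $\Disc\setminus\{0\}$ is $g_{\mathrm{hyp}}=\frac{|\dz|^2}{|z|^2(\log|z|)^2}$, so $v_{\mathrm{hyp}}(z)=-\log|z|-\log\log(1/|z|)$ diverges to $+\infty$ not only at the cusp $z=0$ but also as $|z|\to1^-$. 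Consequently the hypothesis $\ee^{2v(0)}|\dz|^2\le g_{\mathrm{hyp}}$ gives no control near $\partial\Disc$, the flow $v(t)$ has no \emph{a priori} boundary control there for $t>0$, and a barrier built on a collar that reaches up to $|z|=1$ cannot be compared against $v$ in the naive way you describe.

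You also invoke ``the maximally stretched property of Theorem~\ref{thm:2d-existence}'' to conclude $v(t)\le V(t)$. That theorem compares a given flow $\tilde g$ satisfying $\tilde g(0)\le g(0)$ against the canonical instantaneously complete flow from $g(0)$; it is not a comparison principle for an arbitrary supersolution $V$, and in the absence of boundary control for $v$ on $\partial\Disc$ a standard parabolic comparison does not apply either. These two difficulties --- the unboundedness of the cusp factor near $\partial\Disc$ and the lack of boundary control --- are precisely the technical content that \cite[Lemma 3.3]{Top12} resolves. Your closing sentence correctly defers to that reference, so your proposal is acceptable as a citation; just be aware that the accompanying sketch, taken literally, would not close.
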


\begin{lemma}[{Variant of \cite[Lemma 2.9]{Top11}}]
\label{lemma:plane_bounds}
There exists a universal constant $C\in(1,\infty)$ such that if we
consider the \textsc{cb}-Ricci flow
$\bigl(g(t)\bigr)_{t\in[0,\infty)}$ on $\mathbb C$
with
$r_c\in\left(0,\frac1{10}\right)$ and $l_c=\frac18r_c^{-1}$, then
\begin{equation}
\label{eq:uniform-upper-bound-disc}
g(t) \le C |\dz|^2 \qquad\text{for
all $t\in\left[\nicefrac{15}4,\infty\right)$.}
\end{equation}
\end{lemma}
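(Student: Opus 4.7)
The strategy has two stages. First, I would establish $g(\nicefrac{15}4) \le C|\dz|^2$ on all of $\mathbb C$ for some universal $C$, and then I would propagate this to every later time by comparing the Ricci flow with the stationary flat flow $t\mapsto C|\dz|^2$. For the propagation, note that $C|\dz|^2$ is a complete, bounded curvature Ricci flow on $\mathbb C$, so by Theorem \ref{thm:2d-existence} it is the unique maximally stretched instantaneously complete Ricci flow from its own initial data $C|\dz|^2$. Applying the maximally stretched inequality of Remark \ref{max_stretch} to the competitor $\tilde g(t) := g(\nicefrac{15}4 + t)$, whose initial value is by assumption dominated by $C|\dz|^2$, then immediately yields $g(t) \le C|\dz|^2$ for every $t \ge \nicefrac{15}4$, which is exactly \eqref{eq:uniform-upper-bound-disc}.

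For the bound on $\Disc_{\frac12}$ at time $\nicefrac{15}4$, I would feed the cusp barrier $g(\nicefrac72) \le 256\, g_{\mathrm{hyp}}$ of Lemma \ref{lemma:cusp-barrier} into Lemma \ref{lemma:cf-bd-hyp-cusp}. Specifically, the parabolically rescaled flow $\tilde g(\tau) := \frac1{256}\, g(\nicefrac72 + 256\tau)$ is a smooth Ricci flow on $\mathbb C$, hence on $\Disc$, with $\tilde g(0) \le g_{\mathrm{hyp}}$ on $\Disc\setminus\{0\}$, so Lemma \ref{lemma:cf-bd-hyp-cusp} supplies $\sup_{\Disc_{\frac12}}\tilde v(\tau) \le \beta/\tau$ for $\tau\in(0,1]$. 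Evaluating at $\tau = \nicefrac1{1024}$, which corresponds precisely to $t = \nicefrac{15}4$, and unwinding the rescaling delivers the universal bound $g(\nicefrac{15}4) \le 256\,\ee^{2048\beta}|\dz|^2$ on $\Disc_{\frac12}$. For the bound on the complement $\mathbb C \setminus \Disc_{\frac12}$, Corollary \ref{cor:chen-lower-curv-bd-cf} gives $g(\nicefrac{15}4) \le \nicefrac{17}2\, g_{\mathsf{cb}}$ pointwise, and it then suffices to bound $g_{\mathsf{cb}}/|\dz|^2$ by a universal constant there. This is trivial on $\{|z|\ge\sqrt{1+r_c^2}\}$ (where $g_{\mathsf{cb}}= |\dz|^2$ by construction), and on the thin annulus $\{\nicefrac12\le |z|\le\sqrt{1+r_c^2}\}$ it follows from the identity $r_c s_0 + l_c = -\arctan r_c^{-1}$ of Lemma \ref{lemma:cb-prop}(ii), which pins $\cos(r_c s + l_c)$ to order $r_c$ throughout the annulus and hence keeps $u_2(s) + s - s_e$ bounded independently of $r_c$.

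The main obstacle is this last $r_c$-uniformity on the thin annulus: the naive estimate $u_2 \le \frac12\log(1+r_c^2)$ from the $C^1$ matching at $s_0$ is too crude to control the conformal factor of $g_{\mathsf{cb}}$ relative to $|\dz|^2$, so one really has to expand $\cos(r_c s + l_c)$ near its zero at $s_0$ by hand and extract universal constants. Once that is in place, the rescaling step on $\Disc_{\frac12}$, the one-line application of Chen's bound on $\{|z|\ge\sqrt{1+r_c^2}\}$, and the max-stretched propagation of the first paragraph combine directly to give \eqref{eq:uniform-upper-bound-disc}.
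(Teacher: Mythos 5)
Your proposal matches the paper's argument in all of its main steps: (i) rescale parabolically by $256$ so that Lemma \ref{lemma:cusp-barrier} hands you the hypothesis $\tilde g(0)\le g_{\mathrm{hyp}}$ needed for Lemma \ref{lemma:cf-bd-hyp-cusp}, then evaluate at $\tau=\nicefrac1{1024}$ (which is $t=\nicefrac{15}4$) and unwind to get $g(\nicefrac{15}4)\le 256\,\ee^{2048\beta}\,|\dz|^2$ on $\Disc_{\frac12}$; (ii) on $\C\setminus\Disc_{\frac12}$ use Corollary \ref{cor:chen-lower-curv-bd-cf} to reduce to bounding $g_{\mathsf{cb}}/|\dz|^2$; (iii) take $C$ to dominate both constants and propagate forward in time by the maximally-stretched comparison with the static flat flow. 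The one place your plan is off is the final paragraph: no Taylor expansion of $\cos(r_c s+l_c)$ near its zero is needed, and the ``naive'' estimate is not too crude. On the annulus $\bigl\{\nicefrac12\le|z|\le\sqrt{1+r_c^2}\bigr\}$ one has $s\in[s_0,\,s_e+\log 2]$, so $s-s_e\le\log 2$ trivially, while monotonicity of $\cos$ on $\bigl(-\nicefrac\pi2,0\bigr)$ gives $u_2(s)\le u_2(s_0)=\frac12\log(1+r_c^2)\le\frac12\log 2$; together these yield $u_2(s)+s-s_e\le\frac32\log 2$, i.e.\ $g_{\mathsf{cb}}\le 8\,|\dz|^2$ on the annulus with no expansion at all. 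The paper packages the same elementary fact slightly differently, noting $u_{\mathsf{cb}}'\ge -1$ on $(-\infty,s_e+\log 2]$ and evaluating once at $s=s_e+\log 2$ via exactly the $\arctan r_c^{-1}$ identity you cite, arriving at $g(\nicefrac{15}4)\le 68\,|\dz|^2$ outside $\Disc_{\frac12}$. Either route gives a universal constant, so there is no gap in your plan — but the ``main obstacle'' you flagged is not in fact an obstacle.
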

\begin{proof}
Writing $g(t)=\ee^{2u(t)}|\dz|^2$, we define the parabolically
rescaled (and translated) Ricci flow
\[ \ee^{2v(t)}|\dz|^2 := \frac1{256}\,g(\nicefrac72+256t). \]
By Lemma \ref{lemma:cusp-barrier} we have
$\ee^{2v(0)}|\dz|^2=\frac1{256}g(\nicefrac72) \le g_{\mathrm{hyp}}$ on
$\Disc\setminus\{0\}$, satisfying the hypothesis of Lemma
\ref{lemma:cf-bd-hyp-cusp}, and we may conclude
\[ g\bigl(\nicefrac{15}4\bigr) = 256\, \ee^{2v(\nicefrac1{1024})}|\dz|^2
\le 256\,\ee^{2048\beta}\,|\dz|^2 \quad\text{on $\Disc_{\frac12}$.} \]
For the upper bound outside of $\Disc_{\frac12}$, first note that in
logarithmic cylindrical coordinates we have
$\Disc_{\frac12}\setminus\{0\}\simeq(s_e+\log2,\infty)\times[0,2\pi)\supset
\overline{\mathcal{U}_c\cup\mathcal{U}_b}$, hence
$u_{\mathsf{cb}}(s_e+\log2)=u_2(s_e+\log2)$ using the notation
from Definition \ref{defn:cb-metric}. Since
$u'_{\mathsf{cb}}(s)\ge-1$ for all $s\in(-\infty,s_e+\log2]$, it
suffices to estimate it at $s=s_e+\log2$
\begin{align*}
u_{\mathsf{cb}}(s_e+\log2) &=
-\log\left(r_c^{-1}\cos\bigl(r_c(s_e+\log2)+l_c\bigr)\right) \\
&=
-\log\Bigl(r_c^{-1}\cos\bigl(r_c(\log2+\textstyle\frac12\log(1+r_c^2))-\arctan
r_c^{-1}\bigr)\Bigr)  \\
&\le -\log\left(r_c^{-1}\cos\arctan r_c^{-1}\right) =
\frac12\log\bigl(1+r_c^2\bigr) \\
&\le - (s_e+\log2) +s_e + \frac32\log2,
\end{align*}
i.e. $u_{\mathsf{cb}}(s)\le - s +s_e + \frac32\log2$ for all
$s\le s_e+\log 2$.
Using Corollary \ref{cor:chen-lower-curv-bd-cf}, we get
\[ u\bigl(\nicefrac{15}4,s\bigr) \le u_{\mathsf{cb}}(s) +
\frac12\log\left(\frac{15}2+1\right)  \le -s + s_e
+\frac12\log68 \qquad\text{for all $s\in(-\infty,s_e+\log2]$,} \]
or $g\bigl(\nicefrac{15}4\bigr) \le 68 |\dz|^2$ on $\mathbb
C\setminus\Disc_{\frac12}$.
Choosing $C:=256\ee^{2048\beta}>68$, the static (and maximally
stretched) Ricci flow
$\bigl(C|\dz|^2\bigr)_{t\in\left[\nicefrac72,\infty\right)}$ is an
upper barrier for $g(t)$ from time $t=\nicefrac{15}4$ onwards.
\end{proof}

\section{Burst of large curvature}
\label{large_burst_sect}

At this point, we have derived the full set of estimates on the \textsc{cb}-Ricci flow, in particular to show that it will have a burst of large curvature on some time interval during the flow. However, the Ricci flow  required for Theorem \ref{thm:ex-rf-curv-bursts} will evolve from infinitely many copies of the \textsc{cb}-surface, and each of these will evolve as slight perturbations of the \textsc{cb}-Ricci flow. Therefore we need:

\begin{prop}[\textsf{Lower curvature bound}]
\label{prop:lower-curvature-bound}
For parameters $r_c\in(0,\frac1{256})$ and
$l_c\ge \frac18 r_c^{-1}$ let
$\bigl(g_{\mathsf{cb}}(t)\bigr)_{t\in[0,\infty)}$ be the
\textsc{cb}-Ricci flow on $\mathbb C$.
Suppose $\bigl(g(t)\bigr)_{t\in[0,3]}$ is a Ricci flow
on $\Disc_2\subset\mathbb C$ conformally equivalent to
$g_{\mathsf{cb}}(0)\bigr|_{\Disc_2}$ such that for
$\alpha\in\bigl(1,\nicefrac{201}{200}\bigr]$
\begin{equation}
\label{hyp}
\alpha^{-2} g_{\mathsf{cb}}\bigl(\alpha^2 t\bigr)\Bigr|_{\Disc_2}
\le g(t) \le \alpha^2 g_{\mathsf{cb}}\bigl(\alpha^{-2}
t\bigr)\Bigr|_{\Disc_2} \qquad\text{for all $t\in[0,3]$.}
\end{equation}
Then there exists a time $t_1\in\bigl(\frac34,\frac83\bigr)$ such that
we have the curvature estimate
\begin{align}
\label{eq:unbounded-curv}
\sup_{\Disc_2} \GK_{g(t)} &\ge \frac1{r_c} &&\text{for all
$t\in\left[t_1,t_1+\frac1{100}\right]$.}
\end{align}
\end{prop}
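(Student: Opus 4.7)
The plan is to transfer the refined cigar barriers of Lemma~\ref{lemma:refined-cigar-barriers} for $g_{\mathsf{cb}}$ across the sandwich \eqref{hyp} to corresponding barriers for $u$ (the conformal factor of $g$), choose $t_1$ appropriately, and then deduce the pointwise curvature lower bound from a Gauss-Bonnet estimate on a small cap region. Writing $g(t)=\ee^{2u(t,\cdot)}|\dz|^2$ and applying Lemma~\ref{lemma:refined-cigar-barriers} to $g_{\mathsf{cb}}$, one obtains a time $\tilde t_1\in\bigl(\nicefrac34,\nicefrac52\bigr)$ and cigar upper/lower barriers for $u_{\mathsf{cb}}$. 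The sandwich \eqref{hyp} is essentially a parabolic rescaling relation, so it translates these barriers directly into, for $t$ in the common validity interval $[\alpha^2\tilde t_1,\alpha^2(\tilde t_1+1)]$ and $s\geq-l_c/r_c$,
\begin{equation*}
-\log\alpha+\mathcal{C}_{r_c^{-2}}\bigl(\alpha^2t-\tilde t_1,s\bigr)\leq u(t,s)\leq\log\alpha+\mathcal{C}_{(4r_c)^{-2}}\bigl(\alpha^{-2}t-\tilde t_1,s-2r_c^{-1}\bigr).
\end{equation*}

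Set $t_1:=\alpha^2\tilde t_1$. Since $\alpha\leq\nicefrac{201}{200}$ and $\tilde t_1<\nicefrac52$, one checks $t_1\in\bigl(\nicefrac34,\nicefrac83\bigr)$, and the window $[t_1,t_1+\nicefrac1{100}]$ lies inside $[\alpha^2\tilde t_1,\alpha^2(\tilde t_1+1)]$ so that both barriers remain active. For each such $t$, consider the simply connected cap region $R_t\subset\Disc_2$ which in logarithmic cylindrical coordinates corresponds to $\{s\geq 2r_c^{-1}\}$ (equivalently $|z|\leq\ee^{s_e-2r_c^{-1}}$). Integrating $\ee^{2u}$ against the upper cigar barrier yields a volume bound $\Vol_{g(t)}R_t=O(r_c^2)$, essentially because the upper cigar has circumference of order $r_c$ near its tip. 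The idea is then to apply Gauss-Bonnet on $R_t$,
\begin{equation*}
\int_{R_t}\GK_{g(t)}\dmu_{g(t)}=2\pi-\int_{\partial R_t}\kappa_{g(t)}\,\dell_{g(t)},
\end{equation*}
and to control the boundary geodesic-curvature integral by $\pi$ (say), so that $\int_{R_t}\GK\dmu\geq\pi$ gives $\sup_{R_t}\GK_{g(t)}\geq\pi/\Vol_{g(t)}R_t\geq1/r_c$ for $r_c<\nicefrac1{256}$.

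The main obstacle will be bounding $\int_{\partial R_t}\kappa_{g(t)}\,\dell_{g(t)}$, because pointwise barriers on $u$ do not a priori bound its first spatial derivative. The plan for this step is to exploit that both cigar barriers have comparable slopes at the neck $\{s=2r_c^{-1}\}$ (the upper cigar has slope $-\nicefrac12$, the lower one has slope close to $-1$ there), possibly combined with an averaged-slope argument or a parabolic regularity estimate of Shi type, to force $|\partial_s u|=O(1)$ on $\partial R_t$. Alternatively, one may wish to choose a slightly different boundary radius at which the two barriers coincide in slope, so that the geodesic-curvature estimate becomes automatic.
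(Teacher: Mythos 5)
Your plan correctly transfers the cigar barriers of Lemma~\ref{lemma:refined-cigar-barriers} through the sandwich \eqref{hyp} and correctly chooses $t_1=\alpha^2\tilde t_1$. But the Gauss--Bonnet step has a genuine gap, and you have already spotted it yourself: the boundary term $\int_{\partial R_t}\kappa_{g(t)}\,\dell_{g(t)}$ cannot be controlled from the two-sided $C^0$ barriers on the conformal factor. Neither of the fixes you sketch closes it. Having the upper and lower barriers carry comparable slopes at the neck does not constrain $\partial_s u$ there --- a sandwich $f_1\le u\le f_2$ only yields averaged slope bounds via difference quotients, not pointwise ones at a chosen radius; a Shi-type derivative estimate needs an \emph{a priori} curvature bound, which is circular here since unbounded curvature is exactly what one expects in this region; and choosing a radius where the barriers have matching slope still does not pinch the slope of $u$ unless the barriers also match in value, which they do not.

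The paper avoids geodesic curvature altogether by invoking Bol's isoperimetric inequality (Theorem~\ref{thm:bol-isop-ineq}), which estimates $\sup_{\Omega}\GK$ from below using only $\Vol_g\Omega$ and $\L_g\partial\Omega$:
\[ \sup_\Omega\GK_g\;\ge\;\frac{4\pi}{\Vol_g\Omega}-\left(\frac{\L_g\partial\Omega}{\Vol_g\Omega}\right)^{\!2}. \]
The region is also chosen differently from yours: rather than a fixed coordinate cap $\{s\ge2r_c^{-1}\}$, the paper takes the geodesic ball $\gBall_{g(t)}(0;\rho(t))$ of prescribed $g(t)$-area $2\pi r_c$. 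The lower cigar barrier (via the cylinder-area estimate \eqref{eq:cigar-area-t}) guarantees that this ball stays inside the cylinder/bulb region $\{s\ge -l_c/r_c\}$ throughout the time window, while the upper cigar barrier \eqref{eq:upper-cigar-barrier} and \eqref{eq:cigar-estimate} bound the length of $\partial\gBall_{g(t)}(0;\rho(t))$ by $\alpha\cdot8\pi r_c$, since the cigar $\mathcal{C}_{(4r_c)^{-2}}$ has maximal circumference $8\pi r_c$. Feeding area $2\pi r_c$ and boundary length $\le\alpha\,8\pi r_c$ into Bol gives $\sup\GK\ge2r_c^{-1}-16\alpha^2\ge r_c^{-1}$. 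Using a region of prescribed $g(t)$-area (rather than a fixed $s$-coordinate level set, which drifts far from the cigar tip as the soliton translates) is also what makes both ingredient estimates uniform in $t$ over $[t_1,t_1+\nicefrac1{100}]$. Replacing your Gauss--Bonnet argument by Bol applied to a geodesic ball of fixed area would repair the proposal.
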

\begin{proof}
Let $\tilde t_1\in\bigl(\frac34,\frac52\bigr)$ be the  time when the
barriers from Lemma \ref{lemma:refined-cigar-barriers} begin to hold for
$\bigl(g_{\mathsf{cb}}(t)\bigr)_{t\in[0,\infty)}$.
Define $t_1:=\alpha^2\tilde t_1\in\left(\frac34,\frac83\right)$ and
\[ \rho(t) := \max\Bigl\{ r>0 : \Vol_{g(t)} \gBall_{g(t)}(0;r) \le
2\pi r_c \Bigr\}. \]
To see that
$\gBall_{g(t)}\bigl(0;\rho(t)\bigr)\Subset
\mathcal{U}_b\cup\overline{\mathcal{U}_c}$
for sufficiently long after $t=t_1$,
we can use the lower barrier \eqref{eq:lower-cigar-barrier}
and \eqref{eq:cigar-area-t} to estimate
\begin{align*}
\Vol_{g(t)}(\mathcal{U}_c\cup\mathcal{U}_b)
&\ge
\Vol_{\alpha^{-2}g_{\mathsf{cb}}(\alpha^2t)}(\mathcal{U}_c\cup\mathcal{U}_b)\\
&\ge \alpha^{-2}   \Vol_{\mathcal{C}_{r_c^{-2}}}
\left(\left[2r_c^{-2}\left(\alpha^2t-\tilde t_1\right)-\frac{l_c}{r_c},\infty\right)
\times[0,2\pi)\right)\\
&\ge 2\pi \left(\frac{r_cl_c}{\alpha^2} +\frac{2t_1}{\alpha^4}-2t
\right) \ge 2\pi\, r_c \\
& \hspace{10em}\text{for
$t\in\left[\alpha^{-4}t_1,\alpha^{-4}t_1+\frac{r_c}2\bigl(\alpha^{-2}l_c-1\bigr)\right]$.}
\end{align*}
Note that
\begin{align*}
\alpha^{-4}t_1 + \frac{r_c}2\left(\alpha^{-2}l_c-1\right)
&= t_1 + \left(\alpha^{-4}-1\right)t_1 +
\frac{r_c}2\left(\alpha^{-2}l_c-1\right)
\ge t_1+ \frac1{100},
\end{align*}
so $\left[t_1,t_1+\frac1{100}\right]\subset
\left[\alpha^{-4}t_1,\alpha^{-4}t_1+\frac{r_c}2\bigl(\alpha^{-2}l_c-1\bigr)\right]$.
The upper barrier \eqref{eq:upper-cigar-barrier}  and \eqref{eq:cigar-estimate} gives us an
estimate of the length of the boundary at $s=-\frac{l_c}{r_c}$
\[
\L_{g(t)}\partial
\gBall_{g(t)}\bigl(0;\rho(t)\bigr)
\le \L_{\alpha^2g_{\mathsf{cb}}\left(\alpha^{-2}t\right)}
\partial
\gBall_{g(t)}\bigl(0;\rho(t)\bigr)
\le \alpha\,8\pi\, r_c \]
for all $t\in\left[\alpha^2\tilde t_1,\alpha^2(\tilde t_1+1)\right]=
[t_1,t_1+\alpha^2]\supset\left[t_1,t_1+\frac1{100}\right]$.
Applying Bol's isoperimetric inequality (Theorem
\ref{thm:bol-isop-ineq}) to $\gBall_{g(t)}\bigl(0;\rho(t)\bigr)$ yields
\begin{align*}
\sup_{\Disc_2} \GK_{g(t)} &\ge
\sup_{\gBall_{g(t)}\left(0;\rho(t)\right)} \GK_{g(t)} \\
&\ge \frac{4\pi}{\Vol_{g(t)} \gBall_{g(t)}\bigl(0;\rho(t)\bigr)}
- \left(
\frac{\L_{g(t)}\partial\gBall_{g(t)}\bigl(0;\rho(t)\bigr)}{\Vol_{g(t)}
\gBall_{g(t)}\bigl(0;\rho(t)\bigr)}\right)^2 \\
&\ge 2r_c^{-1} - 16\alpha^2 \ge 2r_c^{-1}-17\ge r_c^{-1}
\end{align*}
for all $t\in\left[t_1,t_1+\frac1{100}\right]$.
\end{proof}

To apply this proposition, we need a way to check that the Ricci flows that we construct do satisfy the hypothesis \eqref{hyp}.

\begin{lemma}
\label{lemma:isolated-cb-metric}
For any $r_c\in(0,1)$ let $\bigl(g_{\mathsf{cb}}(t)\bigr)_{t\in[0,\infty)}$ be a
\textsc{cb}-Ricci flow on $\mathbb C$. For all $\alpha>1$ and $T>0$, there exists $R>2$ such that if $\bigl(g(t)\bigr)_{t\in[0,T]}$ is a complete
Ricci flow on any surface $\m$ such that $\bigl(\m,g(0)\bigr)$
contains an isometric copy of $\bigl(\Disc_R,g_{\mathsf{cb}}(0)\bigr)$
then we have
\[ \alpha^{-2} g_{\mathsf{cb}}\bigl(\alpha^2 t\bigr)\Bigr|_{\Disc_2}
\le g(t) \le \alpha^2 g_{\mathsf{cb}}\bigl(\alpha^{-2}
t\bigr)\Bigr|_{\Disc_2} \qquad\text{for all $t\in[0,T]$.} \]
\end{lemma}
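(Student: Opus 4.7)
The claim is pseudolocality-type: for $R$ sufficiently large (depending on $\alpha$ and $T$), the restriction of any complete Ricci flow $g(t)$ to $\Disc_2$ is determined, up to the factor $\alpha^2$, by the initial data on the large ball $\Disc_R$. The plan is to argue by contradiction, using Cheeger--Gromov-style compactness in the spirit of \cite{Top11}.

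Suppose the conclusion fails. Then there are sequences $R_n\to\infty$, complete Ricci flows $\bigl(g_n(t)\bigr)_{t\in[0,T]}$ on surfaces $\m_n$, and isometric embeddings $\iota_n:(\Disc_{R_n},g_{\mathsf{cb}}(0))\hookrightarrow(\m_n,g_n(0))$, such that the pull-backs $\hat g_n:=\iota_n^*g_n$ violate the required two-sided inequality at some point of $\Disc_2\times[0,T]$. First I would derive uniform estimates on $\hat g_n$ on each compact subset of $\mathbb C\times[0,T]$: since $g_{\mathsf{cb}}(0)$ has curvature in $\bigl[-1,\tfrac12\bigr]$ and uniformly bounded geometry by Lemma \ref{lemma:cb-prop}, standard Shi-type parabolic derivative estimates applied to each complete flow $g_n$ give uniform-in-$n$ curvature bounds on compact sets for a short initial time; Chen's lower bound (Corollary \ref{cor:chen-lower-curv-bd-cf}) and the coarse upper cigar and lower sphere barriers from Section~5 (which depend only on the initial local geometry) propagate these bounds across the whole interval $[0,T]$.

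By Cheeger--Gromov compactness, a subsequence of $\hat g_n$ then converges smoothly on compact subsets of $\mathbb C\times[0,T]$ to a smooth Ricci flow $\hat g_\infty$ on $\mathbb C$ with $\hat g_\infty(0)=g_{\mathsf{cb}}(0)$. Completeness of $\hat g_\infty$ follows from that of each $g_n$, combined with the fact that the $g_{\mathsf{cb}}(0)$-distance balls around the origin in $\Disc_{R_n}$ exhaust $(\mathbb C,g_{\mathsf{cb}}(0))$ as $R_n\to\infty$, and that under Ricci flow distances do not collapse faster than the controlled rate given by Chen's lower curvature bound.

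The main obstacle is the identification $\hat g_\infty=g_{\mathsf{cb}}$. For small times at which $g_{\mathsf{cb}}$ retains bounded curvature this is immediate from Hamilton--Shi + Chen uniqueness within the complete bounded-curvature class. To cover the full interval $[0,T]$, which may contain bursts of unbounded curvature, I would invoke the uniqueness clause of Theorem \ref{thm:2d-existence}: the maximally stretched property of the \textsc{cb}-Ricci flow (applied on $\mathbb C$, since $\hat g_\infty(0)=g_{\mathsf{cb}}(0)$) immediately gives $\hat g_\infty\le g_{\mathsf{cb}}$, and a reverse comparison obtained by passing the corresponding maximally stretched inequality through the Cheeger--Gromov limit yields $\hat g_\infty\ge g_{\mathsf{cb}}$, hence equality. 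Once $\hat g_\infty=g_{\mathsf{cb}}$ is established, smooth local convergence on $\Disc_2\times[0,T]$ combined with the strict slack $\alpha>1$ in \eqref{hyp} forces the desired two-sided inequality to hold for all sufficiently large $n$, contradicting the failure assumption.
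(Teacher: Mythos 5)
Your approach---contradiction via Cheeger--Gromov compactness and uniqueness of the limit---is genuinely different from the paper's, which is a short, direct argument: take $r_0$ on the order of $\sqrt{\alpha^2T/\log\alpha}$ and $R=2r_0+2$, apply Chen's pseudolocality estimate (Theorem~\ref{thm:chen-2d-local-curv-estim}) at points of $\partial\Disc_{r_0+2}$ (where both $g_{\mathsf{cb}}(0)$ and $\psi^*g(0)$ coincide with the flat $|\dz|^2$), obtaining $|\GK|\le 2r_0^{-2}$ there for both flows throughout the relevant time interval. Integrating the Ricci flow ODE shows the conformal factors on that circle change by at most a factor $\alpha$, and then the comparison principle in the interior region $\Disc_{r_0+2}$ gives the claim on $\Disc_2$. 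No compactness or limit flow is needed.

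Your sketch has gaps at both of its key steps. For the uniform estimates: Shi-type derivative estimates require bounded curvature on \emph{all} of $\m_n$ at time $0$, which is not assumed (only the restriction to $\Disc_{R_n}$ is prescribed); Corollary~\ref{cor:chen-lower-curv-bd-cf} needs a global initial lower curvature bound on $\m_n$, and the unconditional version ($K_0=\infty$) yields an upper bound on the conformal factor that is singular as $t\to0$; and the barriers of Section~5 do \emph{not} depend only on the initial local geometry. In particular, the lower sphere barrier (Lemma~\ref{lemma:coarse-lower-sphere-barrier}) is proved via the maximally stretched property of the \textsc{cb}-Ricci flow, a property the competitor $g_n$ on $\m_n$ need not have, so it cannot supply the non-collapsing lower bound on $\hat g_n$. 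The correct local tool for these uniform two-sided bounds is precisely Theorem~\ref{thm:chen-2d-local-curv-estim}, which you never invoke. For the identification step: maximal stretching only gives $\hat g_\infty\le g_{\mathsf{cb}}$; there is no ``corresponding maximally stretched inequality'' on the $g_n$ side to pass to the limit, since $g_n$ lives on $\m_n$, not on $\C$, and need not be maximally stretched. What one actually needs is the uniqueness of instantaneously complete Ricci flows on $\C$ from \cite{GT11}, noted in the remark following Theorem~\ref{thm:ex-rf-delayed-unbounded-curv}. With pseudolocality in place of the barriers and that uniqueness theorem in place of the vague reverse comparison, your contradiction scheme could likely be completed, but it would be considerably longer and less elementary than the paper's direct argument.
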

\begin{proof}
First note that from Lemma \ref{lemma:cb-prop}(ii) we have
$g_{\mathsf{cb}}(0)\bigr|_{\mathbb
C\setminus\Disc_2}=|\dz|^2$. For $v_0=\pi$ let $B>0$ be the
constant from Theorem \ref{thm:chen-2d-local-curv-estim}. With
\begin{equation}
\label{eq:defn_r0}
r_0=\max\left\{\sqrt{B\alpha^2T},\sqrt{\frac{4\alpha^2T}{\log\alpha}} \right\}
\end{equation}
we choose $R=2r_0+2$ and have the isometry
\[ \psi : \bigl(\Disc_R,g_{\mathsf{cb}}(0)\bigr) \to
\bigl(\mathcal U_R,g(0)\bigr)\subset\bigl(\m,g(0)\bigr). \]
We are going to apply Theorem \ref{thm:chen-2d-local-curv-estim} to
$\bigl(g_{\mathsf{cb}}(t)\bigr)_{t\in[0,\alpha^2 T]}$ and
$\bigl(g(t)\bigr)_{t\in[0,T]}$
at points $p\in\partial\Disc_{r_0+2}$ and
$q\in\psi\bigl(\partial\Disc_{r_0+2}\bigr)\subset\mathcal
U_R$. With the above choices of $p$, $q$, $r_0$, $g_{\mathsf{cb}}(0)$
and $g(0)$ the Ricci flows $\bigl(g_{\mathsf{cb}}(t)\bigr)_{t\in[0,\alpha^2 T]}$ and
$\bigl(g(t)\bigr)_{t\in[0,T]}$  satisfy conditions (i),(ii) and
(iii) of Theorem \ref{thm:chen-2d-local-curv-estim}, such that
we may conclude
\begin{align*}
\Bigl|\GK_{g_{\mathsf{cb}}(t)}(p)\Bigr|&\le 2r_0^{-2} 
&&\text{for all $p\in\partial\Disc_{r_0+2}$ and $t\in\left[0,\alpha^2T\right]$}  \\
\qquad\text{and}\qquad \Bigl|\GK_{g(t)}(q)\Bigr|&\le 2r_0^{-2}
&&\text{for all $q\in\psi\bigl(\partial\Disc_{r_0+2}\bigr)$ and
$t\in[0,T]$.}  
\end{align*}
Integrating the Ricci flow equation $\pddt g(t)=-2\GK_{g(t)}g(t)$
along with this curvature estimate, we obtain using
\eqref{eq:defn_r0}, (i.e. $\ee^{4r_0^{-2}t}\le\alpha$
for all $t\in\left[0,\alpha^2T\right]$)
\begin{align}
\alpha^{-1}\,g_{\mathsf{cb}}(0) \le \ee^{-4r_0^{-2}t}g_{\mathsf{cb}}(0)
&\le g_{\mathsf{cb}}(t) \le \ee^{4r_0^{-2}t} g_{\mathsf{cb}}(0) \le
\alpha\, g_{\mathsf{cb}}(0) 
\end{align}
on $\partial\Disc_{r_0+2}$ for all $t\in\left[0,\alpha^2T\right]$ and 
\begin{align}
\alpha^{-1}\,g(0)\le \ee^{-4r_0^{-2}t}g(0) &\le g(t) \le \ee^{4r_0^{-2}t} g(0) \le \alpha\, g(0)
\end{align}
on $\psi\bigl(\partial\Disc_{r_0+2}\bigr)$ for all $t\in[0,T]$.
Hence we may estimate on $\partial\Disc_{r_0+2}$
\begin{align*}
\alpha^{-2}g_{\mathsf{cb}}\bigl(\alpha^2t\bigr) &\le \alpha^{-1}g_{\mathsf{cb}}(0) =
\alpha^{-1}\psi^*g(0)
\le \psi^*g(t) \le \alpha \psi^*g(0) = \alpha g_{\mathsf{cb}}(0)
\le \alpha^2 g_{\mathsf{cb}}\bigl(\alpha^{-2}t\bigr)
\end{align*}
for all $t\in[0,T]$.
Therefore, we may apply a comparison principle and conclude the
lemma's statement.
\end{proof}

\section{Burst of unbounded curvature}
\label{sec:burst-unb-curv}

\begin{proof}[Proof of \textbf{Theorem \ref{thm:ex-rf-curv-bursts}}]
We begin by proving the last part of the theorem, i.e. assuming that the underlying Riemann surface is $\mathbb C$, and proving 
\eqref{mainclaim1}.

For $T=3$ and $\alpha=\nicefrac{201}{200}$ we obtain from Lemma
\ref{lemma:isolated-cb-metric} some radius $R>2$.
Pick any sequence of disjoint discs
$\bigl(\Ball(p_j;R)\bigr)_{j\in\mathbb N}$ in $\C$, of radius $R$. 
We obtain the metric $g_0$ on $\mathbb C$ by replacing those discs with \textsc{cb}-metrics 
$g_{\mathsf{cb}}\bigr|_{\Disc_R}$
with cylinder radius $r_c=\frac1{256j}$ and length 
$l_c=\frac18r_c^{-1}=32j$ for each $j\in\mathbb N$.
Now observe that by construction (Lemma \ref{lemma:cb-prop}) we
have both $g_0\ge|\dz|^2$ and $\GK_{g_0}\in\left[-1,\frac12\right]$.
Then Theorem \ref{thm:2d-existence} provides a complete (and
maximally stretched) Ricci flow $\bigl(g(t)\bigr)_{t\in[0,\infty)}$
starting with $g(0)=g_0$. For a short time this Ricci flow coincides
with the Shi solution which has bounded curvature, so we can apply
a maximum principle to the evolution equation of the Gaussian curvature
$\pddt\GK_{g(t)}=\Delta_{g(t)} \GK_{g(t)}+ 2\GK_{g(t)}^2$ 
(see \cite[Proposition 2.5.4]{Top06})
and conclude
\begin{equation}
\label{eq:bdd-curv-1}
\GK_{g(t)} \le \frac1{2(1-t)} \qquad \text{for all $t\in[0,1)$}. 
\end{equation}
By virtue of Lemma \ref{lemma:isolated-cb-metric} we may apply
Proposition \ref{prop:lower-curvature-bound} to
$\bigl(g(t)\bigr)_{t\in[0,3]}$ on each disc $\Ball(p_j;2)$ and
obtain a sequence of times $\bigl(t_{1,j}\bigr)_{j\in\mathbb
N}\in\left(\frac34,\frac83\right)$ such that for each $j\in\mathbb
N$
\[ \sup_{\Ball(p_j;2)} \GK_{g(t)} \ge 256j \qquad\text{for all
$t\in\left[t_{1,j},t_{1,j}+\frac1{100}\right]$.} \]
Therefore, there exists a $t_1\in\left[\frac34,\frac83\right]$ such that
for every $j_0\in\mathbb N$ and
$t\in\left(t_1,t_1+\frac1{100}\right)$ we can find $j\ge j_0$ with
$t\in\left[t_{1,j},t_{1,j}+\frac1{100}\right]$. 
Consequently, we
have
\[ \sup_{\mathbb C} \GK_{g(t)} = \infty \qquad \text{for all
$t\in\left(t_1,t_1+\frac1{100}\right)$.} \]
By \eqref{eq:bdd-curv-1} we know that $t_1\ge1$, hence we may assume
that $t_1\in[1,3)$. 

Let $C>1$ be the universal constant from Lemma
\ref{lemma:plane_bounds}. Using again Lemma
\ref{lemma:isolated-cb-metric} we obtain
\[ g\left(\frac{15}4\alpha^2\right) \le \alpha^2 C |\dz|^2 \quad\text{on
$\Ball(p_j;2)$ for all $j\in\mathbb N$.} \]
On the complement we can simply use Corollary
\ref{cor:chen-lower-curv-bd-cf} in order to conclude
\[ g\left(\frac{15}4\alpha^2\right) \le
\left(\frac{15}2\alpha^2+1\right)|\dz|^2 \quad\text{on $\mathbb C 
\setminus\bigcup\limits_{j\in\mathbb N}\Ball(p_j;2)$.} \]
Assuming (without any restriction) $C>\frac{15}2\alpha^2+1$, we have
$g\left(\frac{15}4\alpha^2\right)\le \alpha^2C\, |\dz|^2$ on $\mathbb C$, and
consequently $g(t)\le \alpha^2C\,|\dz|^2$ for all
$t\in\left[\frac{15}4\alpha^2,\infty\right)$, because the right-hand
side is a (static) maximally stretched Ricci flow on $\mathbb C$. On
the other hand we have $g(t)\ge|\dz|^2$ for all $t\in[0,\infty)$,
because $\bigl(g(t)\bigr)_{t\in[0,\infty)}$ is maximally stretched too.
With the solution sandwiched between $|\dz|^2$ and $\alpha^2C\,|\dz|^2$, 
parabolic regularity shows that we have uniform $\mathrm C^k$
bounds for $t\in[4,\infty)$; in particular, the curvature is bounded
for all $t\ge4$. 
This completes the proof in the case of $\mathbb C$.

Consider now the case that the underlying space is $\mathbb D$, and let $H$ be the Poincar\'e metric thereon. As before, we set $T=3$ and $\al=\nicefrac{201}{200}$, and obtain $R>2$ from Lemma \ref{lemma:isolated-cb-metric}.

\begin{figure}[th]
  \centering
  \includegraphics{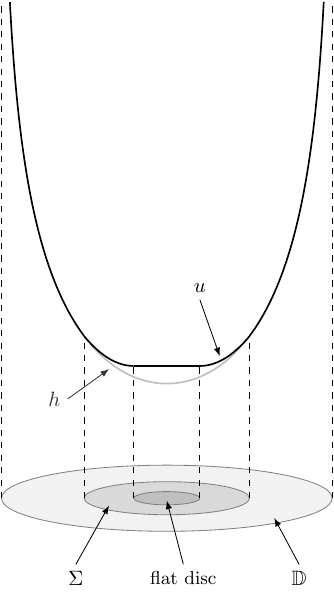}
  \caption{Conformal factors for metrics $H=\ee^{2h}|\dz|^2$ and $g_1=\ee^{2u}|\dz|^2$ on the disc $\mathbb D$}
  \label{fig:conf-disc}
\end{figure}

We begin by `flattening out' $H$ in a small neighbourhood of some point in $\mathbb D$, to give a new metric $g_1$. More precisely, we make a smooth conformal perturbation of $H$ in some domain $\Sigma\subset\subset \mathbb D$ so that the new surface $\bigl(\mathbb D, g_1\bigr)$ contains some (say small) flat disc, while only increasing the metric and retaining the nonpositivity of the curvature, and call the resulting metric $g_1$. (Think of flattening the conformal factor in a neighbourhood of the `origin' $\to$ Figure \ref{fig:conf-disc}.)

We now do the same perturbation near a sequence of points heading out to infinity in $\mathbb D$, with the perturbed regions being disjoint, to give a new metric $g_2$. 
More precisely, pick any isometry $\Gamma$ of $\bigl(\mathbb D, H\bigr)$ such that the images $\Gamma^k(\Sigma)$ are pairwise disjoint for $k\in \N$ (for example one can take a sufficiently large translation in the upper half-space model of the hyperbolic plane)
and define $g_2$ at each point to be the supremum of $(\Gamma^k)^*g_1$ as $k$ varies within $\N$.
The resulting metric $g_2$ is bounded below by $H$, and bounded above by some multiple of $H$. 

Now homothetically expand $g_2$ by a large enough factor so that the infinitely many flat discs that lie isometrically within $\bigl(\mathbb D, g_2\bigr)$ end up of radius at least $R$, and so that the curvature lies within $[-1,0]$. Still, the metric is sandwiched from below by $H$ and from above by some large multiple of $H$. We call this expanded metric $g_3$, and one can think of it as the substitute for the metric $|\dz|^2$ in the $\mathbb C$ case, into which we inserted \textsc{cb}-metrics. 

We now replace each of the flat discs of radius $R$ by a sequence of more and more extreme \textsc{cb}-metrics as we did in the 
$\mathbb C$ case, to get the metric $g_0$ that we are going to flow to obtain the Ricci flow required in the theorem.
By construction, we have $g_0\geq g_3\geq H$, although $g_0$ is not bounded above by any multiple of $H$.

Note that $g_0$ has curvature lying within $[-1,\nicefrac12]$, so as in the case for $\C$, the curvature will initially be bounded as dictated by \eqref{eq:bdd-curv-1}, and just as before, we then have a time interval $(t_1,t_1+\nicefrac1{100})$ when the curvature is unbounded. Moreover, at time $t=\frac{15}4 \alpha^2$ we may conclude that the flow $g(t)$ lies below the metric $\alpha^2 C\, g_3$, in a manner identical to how we dealt with the $\mathbb C$ case.

Thus, at this time, the flow is sandwiched between $H$ and some large
multiple of $H$. We may then appeal to Theorem \ref{geom_thm}, coming
from \cite{GT11}, applied to the Ricci flow $g\!\left(t+\frac{15}4 \alpha^2\right)$, to deduce that $\frac1{2t}\,g(t)$ must converge in $\Cts^k$ to $H$ as $t\to\infty$.
\end{proof}

\begin{proof}[Sketch of proof of \textbf{Theorem \ref{thm:ex-rf-delayed-unbounded-curv}}]
Let us begin by considering the case that the underlying Riemann surface is $\mathbb C$, since in that case it is easiest to directly apply the precise estimates we have derived to prove
Theorem \ref{thm:ex-rf-curv-bursts}, even though they give much more than we now require. Indeed, in this case, the proof is similar to the proof of Theorem \ref{thm:ex-rf-curv-bursts} on $\mathbb C$, but in the construction we choose much longer cylinders, e.g. $l_c=r_c^{-2}$. Combined with the
techniques in \cite{GT13} (in particular using pseudolocality in
the cylinder region, or alternatively barrier arguments), we could modify Lemma
\ref{lemma:refined-cigar-barriers} such that the upper barrier
\eqref{eq:upper-cigar-barrier} holds for longer times. An inspection
of the proof of Proposition \ref{prop:lower-curvature-bound} shows
that we also have the lower curvature bound
\eqref{eq:unbounded-curv} for longer and longer times, which is enough to conclude the result in this case.

\begin{figure}[th]
  \centering
  \includegraphics{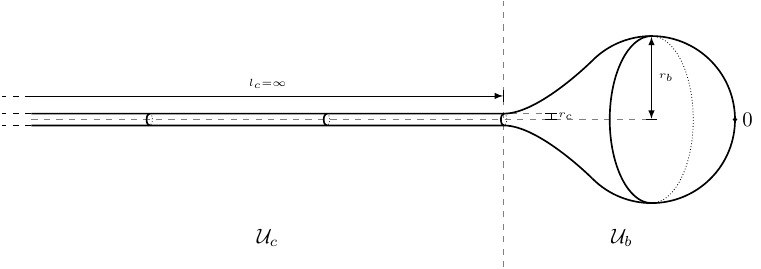}
  \caption{Long `lollipop' surface: Infinite cylinder with bulb cap}
  \label{fig:lollipop2}
\end{figure}
In the case that we work on a more general underlying Riemann surface $\Mf$, we take a slightly different approach, which is a lot simpler than that of Theorem \ref{thm:ex-rf-curv-bursts} for not having to worry about making the curvature bounded for large times. Consider instead of the \textsc{cb}-surface we describe in Section \ref{strategy_sect}, just the cylinder $\mathcal{U}_c$ and bulb  $\mathcal{U}_b$ parts, but with the cylinder infinitely long ($\to$ Figure \ref{fig:lollipop2}). This `lollipop' surface has a subsequent Ricci flow whose behaviour should be quite apparent by now: The curvature, although initially bounded above and below, will gradually blow up as the bulb part shrinks, until it reaches a large value depending on $r_c$, from which moment the Ricci flow will look more and more like a thin cigar, with the large curvature persisting. This can be made precise using slight variants of the estimates we proved in Sections \ref{width_sect} and \ref{barrier_sect} (without requiring Section \ref{barrier_final_sect}) and in Proposition \ref{prop:lower-curvature-bound}. 
As in the $\mathbb C$ case of this sketch proof, 
the variant of Lemma \ref{lemma:isolated-cb-metric} will have a slightly adjusted proof using the techniques from \cite{GT13} or barrier arguments.


Equipped with this long lollipop surface, we construct the initial metric $g_0$ required to prove Theorem \ref{thm:ex-rf-delayed-unbounded-curv} on $\Mf$ as follows. Pick a sequence of points $\left(p_j\right)_{j\in\mathbb N}$ in $\Mf$ without any accumulation point (i.e. heading off to infinity) and let $H$ be the complete conformal hyperbolic metric on $\Mf\setminus\bigcup_{j\in\mathbb N} \{p_j\}$. Near each of the punctures $p_j$, the metric $H$ will have the structure of a hyperbolic cusp, which we can truncate further and further out as $j$ increases, and add in a lollipop surface with smaller and smaller $r_c$, that has itself had its cylinder part truncated. 
Providing we truncate the cylinder to have length $l_c$, where $l_c r_c\to\infty$ as $j\to\infty$, our new surface will generate a Ricci flow that has the properties demanded by Theorem \ref{thm:ex-rf-delayed-unbounded-curv}.
\end{proof}


\begin{appendix}
\section{\textsl{A priori} estimates}
\begin{thm}[\textsc{Chen} {\cite[Corollary 2.3(i)]{Che09}}]
\label{thm:chen-lower-curv-bd}
Let $\bigl(g(t)\bigr)_{t\in[0,T]}$ be a complete Ricci flow on a
surface $\Mf^2$.
If $\GK_{g(0)}\ge -K_0$ for some $K_0\in[0,\infty]$, then
\begin{equation}
\label{eq:chen-lower-curv-bd}
\GK_{g(t)} \ge -\frac1{2t+K_0^{-1}}
\qquad\text{for all }t\in[0,T].
\end{equation}
\end{thm}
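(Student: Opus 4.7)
The plan is a parabolic maximum principle argument comparing $\GK_{g(t)}$ with a spatially constant ODE subsolution. Under two-dimensional Ricci flow one has
$$\pddt \GK_{g(t)} = \Delta_{g(t)} \GK_{g(t)} + 2\, \GK_{g(t)}^2$$
(cf.\ \cite[Proposition 2.5.4]{Top06}), and the function $\phi(t) := -\frac{1}{2t + K_0^{-1}}$ solves the ODE $\phi' = 2\phi^2$ with initial value $\phi(0) = -K_0$. Setting $h := \GK_{g(t)} - \phi(t)$ one obtains the parabolic inequality
$$\pddt h - \Delta_{g(t)} h = 2\bigl(\GK_{g(t)} + \phi(t)\bigr)\, h,$$
with $h(0,\cdot) \ge 0$; the goal is to deduce $h \ge 0$ throughout $\mathcal{M}\times[0,T]$.

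The central difficulty is that $\bigl(\mathcal{M}, g(t)\bigr)$ is only assumed complete, neither compact nor a priori of bounded curvature, so the usual weak maximum principle does not apply off the shelf. To circumvent this I would localize following Chen. Fix a base point $p_0$ and, for large $R$, work inside the $g(0)$-geodesic ball $\gBall_{g(0)}(p_0; R)$; build a smooth radial cutoff $\eta_R$ from the $g(t)$-distance to $p_0$, whose Laplacian is controlled in the barrier sense using completeness of the flow (Calabi's trick applied to the distance function, together with the crude lower Ricci bound on the compact set, which for surfaces is simply a lower bound on $\GK$ that can be obtained on each fixed compact region by a short-time Shi-type argument applied to a truncated initial datum). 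Then apply the maximum principle to a perturbed quantity of the form $h_\epsilon := h + \epsilon\, e^{A t}(1 + \eta_R)$: for $A$ chosen sufficiently large, any first touching of the zero barrier by $h_\epsilon$ is forced to occur at a space-time interior point and yields a strict contradiction with the parabolic inequality above. Letting $\epsilon \downarrow 0$ and then $R \uparrow \infty$ establishes $h \ge 0$.

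The most delicate step is the cutoff construction in the curvature-unbounded regime, which is the technical core of Chen's work. What makes the whole argument go through is the fact that only a \emph{lower} bound on $\GK$ is being propagated forward, and the nonlinear forcing $2\GK^2$ pushes the curvature \emph{upward} precisely at points where $\GK$ is very negative, so $\phi(t)$ is a genuine subsolution from below; no such mechanism is available for upper bounds (indeed in this paper $\GK$ is shown to blow up from above). Once the localization is rigorous, the pointwise comparison with $\phi(t)$ is essentially algebraic, and the quantitative form $-1/(2t + K_0^{-1})$ is exactly the integral of $\phi' = 2\phi^2$.
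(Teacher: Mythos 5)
The paper does not prove this theorem: it is quoted from Chen \cite[Corollary~2.3(i)]{Che09} and placed in an appendix of \emph{a priori} estimates precisely because the proof lies in the cited reference, so there is no in-paper argument to compare against. Assessing your sketch on its own terms: the ODE-comparison setup is correct and standard — $\phi(t)=-\tfrac{1}{2t+K_0^{-1}}$ solves $\phi'=2\phi^2$ with $\phi(0)=-K_0$, and $h=\GK_{g(t)}-\phi$ satisfies $\pddt h-\Delta_{g(t)}h=2(\GK_{g(t)}+\phi)h$ with $h(0,\cdot)\ge0$.

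The gap is in the localization, which is the actual mathematical content of Chen's theorem, and the route you propose around it is circular. To bound $\Delta\eta_R$ by Calabi's trick and Laplacian comparison you need a lower bound on $\Ric_{g(t)}=\GK_{g(t)}\,g(t)$ along geodesics, i.e. on $\GK_{g(t)}$ itself — which is precisely the inequality being proved. Your parenthetical escape, that such a bound ``can be obtained on each fixed compact region by a short-time Shi-type argument applied to a truncated initial datum'', does not close the loop: a Shi flow started from truncated data is a \emph{different} Ricci flow, and transferring its curvature bound to the given flow $g(t)$ would require a uniqueness statement for complete flows of possibly unbounded curvature — which is exactly what Chen's paper is written to prove, \emph{using} the present estimate as a key lemma. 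Even the trivial lower bound on $\GK_{g(t)}$ over a fixed compact region (which exists by continuity) is $R$-dependent and can degenerate as $R\to\infty$, and your sketch does not explain why the argument survives that limit. Chen's actual proof avoids the circularity by establishing a purely \emph{local} estimate on a ball $\gBall(x_0,r_0)$ without any completeness hypothesis, with error terms that vanish as $r_0\to\infty$, and the local estimate itself is obtained via a continuity/bootstrap argument in $t$ (using the bound already established before the first possible failure time to control distance distortion and the cutoff's Laplacian) together with careful point-selection. Your sketch would be essentially complete if $g(t)$ were additionally assumed to have bounded curvature on $[0,T]$, so that a maximum principle \`a la Hamilton or Ecker--Huisken applied directly; but removing that hypothesis is the entire point of the theorem.
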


\begin{cor}
\label{cor:chen-lower-curv-bd-cf}
Let $\bigl(g(t)\bigr)_{t\in[0,T]}$ be a complete
Ricci flow on a surface $\Mf^2$, such that $\GK_{g(0)}\ge-K_0$ for
some $K_0\in[0,\infty)$. Then writing in a local complex
isothermal coordinate $g(t)=\ee^{2u(t)}|\dz|^2$, we have
\begin{equation}
\label{eq:chen-lower-curv-bd-cf}
u(t_2) \le u(t_1) + \frac12\log\frac{2t_2+K_0^{-1}}{2t_1+K_0^{-1}}
\qquad\text{for all $0\le t_1<t_2\le T$.}
\end{equation}
\end{cor}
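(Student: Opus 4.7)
The idea is simply to convert the pointwise lower curvature bound from Theorem \ref{thm:chen-lower-curv-bd} into a pointwise upper bound on the time derivative of the conformal factor, and then integrate in time.

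In an isothermal chart with $g(t)=\ee^{2u(t)}|\dz|^2$ on a surface, the Ricci flow equation $\pddt g = -2\GK_{g(t)}\,g$ becomes the scalar evolution
\begin{equation*}
\pddt u(t,z) = -\GK_{g(t)}(z),
\end{equation*}
pointwise in $z$. This is the only place where the 2-dimensional, conformal nature of the flow is used.

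Next, since $(g(t))_{t\in[0,T]}$ is a complete Ricci flow on a surface with $\GK_{g(0)}\ge -K_0$, Theorem \ref{thm:chen-lower-curv-bd} gives the pointwise bound
\begin{equation*}
\GK_{g(t)}(z) \ge -\frac1{2t+K_0^{-1}}
\qquad\text{for all $t\in[0,T]$ and all $z$ in the chart.}
\end{equation*}
Combining the two displays yields
\begin{equation*}
\pddt u(t,z) \le \frac1{2t+K_0^{-1}}.
\end{equation*}

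Finally, for fixed $z$ and $0\le t_1<t_2\le T$, integrating this inequality in $t$ from $t_1$ to $t_2$ gives
\begin{equation*}
u(t_2,z) - u(t_1,z) \le \int_{t_1}^{t_2} \frac{\df t}{2t+K_0^{-1}}
= \frac12 \log\frac{2t_2+K_0^{-1}}{2t_1+K_0^{-1}},
\end{equation*}
which is exactly \eqref{eq:chen-lower-curv-bd-cf}. There is no real obstacle here: the only substantive input is Chen's lower curvature bound, and the rest is a one-line ODE integration in the time variable.
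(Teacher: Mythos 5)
Your proof is correct and is precisely the argument the paper intends (the corollary is stated in the appendix without an explicit proof, following immediately from Theorem~\ref{thm:chen-lower-curv-bd}). The identity $\pddt u = -\GK_{g(t)}$ in isothermal coordinates plus integration of Chen's bound is exactly the intended one-line deduction.
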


A more elaborate argument of \textsc{Chen} leads to the following
pseudolocality-type result giving two-sided estimates on the curvature.
\begin{thm}[\textsc{Chen} {\cite[Proposition 3.9]{Che09}}]
\label{thm:chen-2d-local-curv-estim}
Let $\bigl(g(t)\bigr)_{t\in[0,T]}$ be a Ricci flow on a surface
$\Mf^2$. If we have for some $p\in\Mf$, $r_0>0$ and $v_0>0$
\begin{compactenum}[(i)]\medskip
\item $\gBall_{g(t)}(p;r_0)\Subset\Mf$ for all $t\in[0,T]$;
\item $\Bigl| \GK_{g(0)}\Bigr| \le r_0^{-2}$ on $\gBall_{g(0)}(p;r_0)$;
\item $\Vol_{g(0)}\gBall_{g(0)}(p;r_0) \ge v_0 r_0^2$,
\end{compactenum}\medskip
then there exists a constant $B=B(v_0)>0$ such that for all
$t\in\bigl[0,\min\bigl\{T,\frac1B r_0^2\bigr\}\bigr]$
\[ \Bigl| \GK_{g(t)} \Bigr| \le 2r_0^{-2}
\qquad\text{on } \gBall_{g(t)}\Bigl(p;\frac{r_0}2\Bigr). \]
\end{thm}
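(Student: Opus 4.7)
The plan is to prove this local two-sided curvature estimate by combining a (localised) lower bound of the type in Theorem \ref{thm:chen-lower-curv-bd} with a rescaling-and-contradiction argument for the upper bound, using the volume assumption $\Vol_{g(0)}\gBall_{g(0)}(p;r_0) \ge v_0 r_0^2$ to guarantee non-collapsing. First parabolically rescale so that $r_0=1$; then we must show $|\GK_{g(t)}|\le 2$ on $\gBall_{g(t)}(p;\nicefrac12)$ for $t\in[0,1/B(v_0)]$. The compact containment hypothesis (i) lets us work as if $\m$ were complete near $p$ throughout the flow.

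The lower bound direction is the soft one. Applying the evolution equation $\pddt \GK_{g(t)} = \Delta_{g(t)}\GK_{g(t)} + 2\GK_{g(t)}^2$ to $\min\{\GK,0\}$, together with a cutoff in terms of the $g(0)$-distance to $\partial\gBall_{g(0)}(p;1)$, yields a local version of $\GK_{g(t)} \ge -1/(2t+1)$ valid on, say, $\gBall_{g(0)}(p;\nicefrac34)$ for short time. Feeding this into Corollary \ref{cor:chen-lower-curv-bd-cf} (written in an isothermal coordinate around $p$) gives uniform two-sided equivalence of $g(t)$ and $g(0)$ on $\gBall_{g(0)}(p;\nicefrac34)$, so in particular $\gBall_{g(t)}(p;\nicefrac12)\Subset\gBall_{g(0)}(p;\nicefrac34)$ for small $t$, and the volume lower bound is preserved (up to a dimensional constant) for all $t$ under consideration.

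For the upper bound I would argue by contradiction. If the statement fails for every $B$, there is a sequence of Ricci flows $\bigl(g_j(t)\bigr)_{t\in[0,T_j]}$ satisfying the hypotheses with the same $v_0$, and points $(q_j,t_j)$ with $t_j\to 0$ and $q_j\in\overline{\gBall_{g_j(t_j)}(p_j;\nicefrac12)}$ where $\GK_{g_j(t_j)}(q_j)$ first exceeds $2$. A Hamilton-style point-picking in a parabolic neighbourhood produces basepoints with near-maximal curvature on progressively larger parabolic balls, after which one parabolically rescales so that the curvature at the basepoint is normalised to $1$. The lower curvature bound from the previous step and the volume bound imply that the rescaled flows are non-collapsed and have uniformly bounded curvature on any fixed parabolic neighbourhood, so by Hamilton's compactness theorem one extracts a smooth pointed limit $\bigl(\m_\infty,g_\infty(t),x_\infty\bigr)_{t\in(-\infty,0]}$, a complete ancient $2$-dimensional Ricci flow with $\GK_{g_\infty(0)}(x_\infty)=1$ and bounded curvature. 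By the classification of such solutions in two dimensions (cigar soliton, or flat, in the non-compact case), the limit must be the cigar; but the rescaling forces the sequence to lie in a ball of $g_j(0)$-radius shrinking to zero in the original scale, and the $|\GK_{g(0)}|\le 1$ hypothesis bounds the curvature of the \emph{initial} rescaled metric by zero, so the limit would have to be flat at $t=0$, contradicting $\GK_{g_\infty(0)}(x_\infty)=1$.

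The main obstacle is the non-collapsing step: extracting a smooth limit requires distance comparisons between $g_j(t)$ and $g_j(0)$ on increasingly large rescaled balls, which in turn requires the lower curvature bound to already be valid there --- a mild bootstrap that is unpacked by always working up to the \emph{first} time the upper bound $|\GK|\le 2$ is violated, so that the previous step applies throughout. A secondary technical point is verifying that no curvature escapes to the spatial boundary of $\gBall_{g(0)}(p;1)$ before the contradiction, which is handled by the cutoff used in establishing the lower bound and hypothesis (i).
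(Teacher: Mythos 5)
The statement you are proving is \emph{not} proved in this paper at all: it appears only in the Appendix, explicitly attributed to Chen \cite[Proposition 3.9]{Che09}, and is used as a black box in Lemma~\ref{lemma:isolated-cb-metric}. There is therefore no ``paper's own proof'' for your argument to be compared against, so let me evaluate your sketch on its merits as a proof of Chen's result.

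Your overall strategy (rescale to $r_0=1$, get the lower bound from a localised version of Theorem~\ref{thm:chen-lower-curv-bd}, then establish the upper bound by point-picking, parabolic rescaling, Hamilton--Cheeger--Gromov compactness and classification of the limit) is a reasonable Perelman-style route, but the final contradiction step does not close as written. After rescaling by $Q_j=\GK_{g_j(t_j)}(q_j)\to\infty$, the time $t=0$ of the rescaled flow corresponds to $t_j$ in the \emph{original} flow, not to $t=0$; the hypothesis $|\GK_{g(0)}|\leq 1$ bounds the rescaled curvature only at the rescaled time $-Q_jt_j$, by $1/Q_j$. If $Q_jt_j\to\infty$, that information is pushed to $-\infty$ and is lost in the limit, so you cannot conclude the limit is flat. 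You must split into two cases: (a) $Q_jt_j$ bounded, in which case the limit has a flat initial slice and uniqueness kills it as you say; (b) $Q_jt_j\to\infty$, in which case the limit is a complete ancient solution which (because the local lower bound $\GK\geq -1/(2t+1)$ rescales to $\geq -1/Q_j\to 0$) has $\GK\geq 0$. You then still have to rule out the cigar soliton, and your sketch does not do so --- the cigar \emph{is} a non-flat ancient $2$-d solution with bounded curvature and $\GK\geq 0$. Ruling it out requires the $\kappa$-noncollapsing of the limit, and the volume lower bound $v_0r_0^2$ in hypothesis (iii) is a lower bound at a single scale of the original flow; passing it through the point-picking and rescaling to obtain noncollapsing of the limit at all scales (which is what the classification of $\kappa$-solutions needs, and what excludes the cigar) is precisely the nontrivial heart of this kind of pseudolocality argument and needs to be supplied. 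In summary: the architecture is sound, but the cigar case is a genuine gap, and the way the $|\GK_{g(0)}|\le 1$ hypothesis is invoked against the limit conflates the two time scales.
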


The following isoperimetric inequality due to G. Bol allows us
estimate the maximum of the curvature on a surface's domain from
below if we know its area and the length of its boundary. For an
alternative proof using curvature flows,
and further generalisations see
\cite{Top98} and \cite{Top99}.

\begin{thm}{\rm\cite[eqn.~(30) on p.~230]{Bol41}}
\label{thm:bol-isop-ineq}
Let $\Omega$ be a simply-connected domain on a surface
$\bigl(\Mf^2,g\bigr)$, then
\begin{equation}
\label{eq:bol-isop-ineq}
\bigl(\mathrm{L}_g\,\partial\Omega\bigr)^2 \ge 4\pi \Vol_g(\Omega)
- \bigl(\Vol_g\Omega\bigr)^2 \sup_\Omega \GK_g.
\end{equation}
\end{thm}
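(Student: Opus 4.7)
The plan is to evolve the boundary $\partial\Omega$ inwards by curve shortening flow and to monitor the pair $(A(t),L(t))$ of enclosed area and boundary length, deriving a differential inequality for $L^2$ regarded as a function of $A$, which I would then integrate back from the extinction time to recover \eqref{eq:bol-isop-ineq}. Write $K_0 := \sup_\Omega \GK_g$; if $K_0 = +\infty$ the inequality is trivial, so assume $K_0 < \infty$.

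Concretely, let $\gamma(t)$ solve $\partial_t \gamma = -\kappa_g \nu$ (with $\nu$ the outward unit normal) starting from $\gamma(0) = \partial\Omega$, and let $\Omega(t)$ be the enclosed simply-connected region with $A(t) = \Vol_g \Omega(t)$ and $L(t) = \mathrm{L}_g\,\gamma(t)$. The same Gauss-Bonnet/Leibniz computation used in the proof of Proposition \ref{prop:finite-csf} (at half the speed from that proposition) gives
\begin{equation*}
A'(t) = -\int_{\gamma(t)} \kappa_g \, ds = \int_{\Omega(t)} \GK_g \, d\mu_g - 2\pi \leq K_0 A(t) - 2\pi,
\end{equation*}
while the standard first-variation-of-length formula yields $L'(t) = -\int_{\gamma(t)} \kappa_g^2 \, ds$. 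Applying Cauchy-Schwarz in the form $L(t) \int_{\gamma(t)} \kappa_g^2 \, ds \geq \bigl(\int_{\gamma(t)} \kappa_g \, ds\bigr)^2 = A'(t)^2$ and viewing $L^2$ as a function of $A$ (monotone in $t$), I get
\begin{equation*}
\frac{d(L^2)}{dA} \;=\; \frac{-2L \int_{\gamma(t)}\kappa_g^2 \, ds}{A'(t)} \;\geq\; -2 A'(t) \;\geq\; 4\pi - 2 K_0 A,
\end{equation*}
where dividing by $A'(t)<0$ flips the inequality sign. Provided the CSF contracts $\Omega(t)$ to a round point as $t \nearrow T^*$, with both $A(t), L(t) \searrow 0$ and $L(t)^2/A(t) \to 4\pi$ by the local flatness of $(\Mf,g)$ near a shrinking point, integrating the above differential inequality in $a$ from $0$ up to $A(0)=\Vol_g \Omega$ yields $L^2 \geq 4\pi A - K_0 A^2$.

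The main obstacle is the analytic justification of the flow itself: that it exists smoothly, stays embedded inside $\Omega$, and really does contract to a round point. This is particularly delicate when $K_0 A > 2\pi$, where $A'(0)$ may even be positive so that the ``inward'' flow does not a priori shrink the domain. A clean way to sidestep these dynamical issues entirely is to pass to the isoperimetric profile $I(a) := \inf\bigl\{\mathrm{L}_g(\partial U) : U \subset \Omega \text{ a smooth topological disk}, \Vol_g U = a\bigr\}$: geometric-measure-theoretic compactness yields a smooth minimiser $U_a$ whose boundary has constant geodesic curvature $\kappa(a) = I'(a)$, Gauss-Bonnet on $U_a$ gives $I(a)I'(a) = \kappa(a)I(a) = 2\pi - \int_{U_a}\GK_g\,d\mu_g \geq 2\pi - K_0 a$, and integrating this ODE from $a=0$ (where $I(0)=0$) up to $a = \Vol_g \Omega$ recovers the same inequality without ever invoking curve shortening flow.
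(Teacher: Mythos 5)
The paper does not supply a proof of Theorem \ref{thm:bol-isop-ineq}: it cites Bol's 1941 paper directly and refers the reader to \cite{Top98,Top99} for an alternative proof via curvature flows. Your curve-shortening sketch is essentially the strategy of those references, and your computations ($A'=\int_{\Omega(t)}\GK_g-2\pi$, $L'=-\int\kappa_g^2$, Cauchy--Schwarz, and the reparametrisation of $L^2$ by $A$) are all correct. You have also correctly identified the main difficulty: on a general surface one must justify that the embedded CSF stays inside $\Omega$, develops no singularity, and extinguishes at a round point, and the monotone reparametrisation by $A$ breaks down whenever $\int_\Omega\GK_g>2\pi$ so that $A'(0)>0$ and the curve initially moves \emph{out} of $\Omega$ (where $\sup\GK_g$ may exceed $K_0$, invalidating the bound $A'\leq K_0A-2\pi$). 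So as a proof the CSF route remains a sketch, as you say.

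Your fallback via the isoperimetric profile $I(a)$ is indeed the more classical route and closer in spirit to Bol's own argument, but it has unflagged gaps of its own. First, the GMT minimiser $U_a$ need not be a topological disk (it can be disconnected or multiply connected), and without $\chi(U_a)=1$ the Gauss--Bonnet identity $\kappa(a)I(a)=2\pi-\int_{U_a}\GK_g$ that you use is wrong. Second, $\partial U_a$ may touch $\partial\Omega$, in which case the free boundary has constant curvature only off the contact set and the first-variation identity $I'(a)=\kappa(a)$ is unavailable there. Third, $I$ is in general only Lipschitz, not $C^1$; the correct statement is a one-sided Dini-derivative inequality of the form $D_-\!\left(\tfrac12 I^2\right)(a)\geq 2\pi-K_0a$, obtained by pushing $U_a$ inward, which must then be integrated as a weak differential inequality. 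None of this is fatal---Bol's inequality is true and this route can be completed rigorously (Bol's original argument, and the classical treatments via inner parallel curves, handle exactly these regularity issues)---but as written the proposal identifies the right mechanisms without closing the analytic gaps that turn them into a proof.
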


\end{appendix}

\bibliography{2d_ricci_flow}

\end{document}